\newtheorem{theorem}{Theorem}[section]
\newtheorem{corollary}[theorem]{Corollary}
\newtheorem{lemma}[theorem]{Lemma}
\newtheorem{proposition}[theorem]{Proposition}
\theoremstyle{definition}
\theoremstyle{remark}
\newtheorem{remark}[theorem]{Remark}
\newtheorem*{remark*}{Remark}
\numberwithin{equation}{section}
\renewcommand{\theequation}{\arabic{section}.\arabic{equation}}
\newcommand{\Uu}{{\mathcal U}}
\newcommand{\N}{\mathbb{N}}
\newcommand{\Z}{\mathbb{Z}}
\newcommand{\R}{\mathbb{R}}
\newcommand{\C}{\mathbb{C}}
\newcommand{\la}{\lambda}
\newcommand{\eps}{\varepsilon}
\newcommand{\ii}{{\rm i}}
\newcommand{\dd}{\,{\rm d}}
\newcommand{\re}{{\rm Re }}
\newcommand{\im}{{\rm Im }}
\definecolor{DarkBlue}{rgb}{0,0.1,0.7}  
\definecolor{DarkGreen}{rgb}{0,0.5,0.1}
\newcommand\soutD{\bgroup\markoverwith
	{\textcolor{DarkGreen}{\rule[.5ex]{2pt}{1pt}}}\ULon}
\newcommand{\Hm}[1]{\leavevmode{\marginpar{\tiny%
			$\hbox to 0mm{\hspace*{-0.5mm}$\leftarrow$\hss}%
			\vcenter{\vrule depth 0.1mm height 0.1mm 
		    width \the\marginparwidth}%
			\hbox to
			-0.5mm{\hss$\rightarrow$\hspace*{-0.5mm}}$\\
			\relax\raggedright #1}}}
\title{Criticality transition for positive powers of the discrete Laplacian on the half line}
\author{Borbala Gerhat}
\address[Borbala Gerhat]{Department of Mathematics, Faculty of Nuclear Sciences and Physical Engineering, Czech Technical University in Prague, Trojanova 13, 120 00 Praha, Czech Republic}
\email{borbala.gerhat@fjfi.cvut.cz}
\author{David Krej\v ci\v r\'ik}
\address[David Krej\v ci\v r\'ik]{Department of Mathematics, Faculty of Nuclear Sciences and Physical Engineering, Czech Technical University in Prague, Trojanova 13, 120 00 Praha, Czech Republic}
\email{david.krejcirik@fjfi.cvut.cz}
\author{Franti\v sek \v Stampach}
\address[Franti\v sek \v Stampach]{Department of Mathematics, Faculty of Nuclear Sciences and Physical Engineering, Czech Technical University in Prague, Trojanova 13, 120 00 Praha, Czech Republic}
\email{frantisek.stampach@fjfi.cvut.cz}
\subjclass[2020]{26D15, 26A33}
\keywords{positive powers of discrete Laplacian, discrete fractional Laplacian, criticality, 
subcriticality, Hardy inequality}
\date{July 18, 2023}
\begin{document}

\begin{abstract}
We study the criticality and subcriticality of powers $(-\Delta)^\alpha$ with $\alpha>0$ of the discrete Laplacian $-\Delta$ acting on $\ell^2(\N)$. We prove that these positive powers of the Laplacian are critical if and only if $\alpha \ge 3/2$. We complement our analysis with Hardy type inequalities for $(-\Delta)^\alpha$ in the subcritical regimes $\alpha \in (0,3/2)$. As an illustration of the critical case, we describe the negative eigenvalues emerging by coupling the discrete bilaplacian with an arbitrarily small localized potential.
\end{abstract}
	
\maketitle

\section{Introduction}

\subsection{Physical motivation}
The uniqueness of the world we live in consists in that $\R^3$
is the lowest dimensional Euclidean space for which
the Brownian motion is \emph{transient}.
Indeed, it is well known that the Brownian particle in $\R^d$
will escape from any bounded set after some time forever if $d \geq 3$,
while the opposite holds true in low dimensions,
i.e.~the Brownian motion is \emph{recurrent} in~$\R^1$ and~$\R^2$.
This is a well known criticality transition in dimensions.

Since the Brownian motion is mathematically introduced
via the heat equation, 
it is not surprising that the transiency is closely related to
spectral properties of the Laplacian.
Indeed, the self-adjoint realization $-\Delta$ in $L^2(\R^d)$
is \emph{subcritical} if and only if $d \geq 3$,
meaning that there exists a non-trivial non-negative function~$V$
such that the Hardy-type inequality $-\Delta \geq V$ holds
in the sense of quadratic forms. 
On the other hand, $-\Delta$ is critical if $d=1,2$
in the sense that $\inf \sigma(-\Delta+V) < 0$ 
for every non-trivial non-positive function~$V$. 
The Hardy inequality has other physical consequences,
namely in quantum mechanics where
it can be interpreted in terms of the uncertainty principle
and leads to the stability of matter in~$\R^3$.

The case of Brownian particles dying on massive subsets of~$\R^d$ is less interesting
in the sense that the Dirichlet Laplacian $-\Delta$
in $\R^d \setminus \overline\Omega$
with any~$\Omega$ non-empty and open is always subcritical.  
In particular, the Brownian motion in the half-space 
$\R^{d-1} \times (0,\infty)$ is transient for every $d \geq 1$,
so no criticality transition in dimensions occurs. 

There is a probabilistic interpretation of powers of the Laplacian
in terms of an anomalous diffusion. From this perspective,
the case of the half-line is equally uninteresting because 
all the powers $(-\Delta)^k$ in $L^2(0,\infty)$
with $k \in \N$ are subcritical.  
There is no criticality transition in powers.

The objective of this paper is to disclose the surprising fact
that the situation is very different in the discrete setting. 
Indeed, we demonstrate that the integer powers of 
the discrete Laplacian $(-\Delta)^k$ on $\ell^2(\N)$
are subcritical if and only if $k =1$.
What is more curious in fine properties of this transition, 
we consider possibly non-integer powers 
and reveal the following precise threshold in all positive powers:
\begin{center}
$(-\Delta)^\alpha$ on $\ell^2(\N)$ is subcritical if and only if
$\alpha < 3/2$.
\end{center}

\subsection{Mathematical formulation}

The discrete Laplacian on the (discrete) half line $\N=\{1,2,3,\dots\}$ is defined as the second-order difference operator $-\Delta$ given by the formula
\begin{equation}
	(-\Delta u)_n := - u_{n-1} + 2 u_n - u_{n+1}, \quad n\in\N,
\end{equation}
where $u=\{u_{n}\}_{n=1}^{\infty}$ is a complex sequence, together with the convention $u_{0}:=0$. When regarded as an operator on the Hilbert space $\ell^2(\N)$, the discrete Laplacian is bounded and self-adjoint with spectrum $\sigma (-\Delta) = [0,4]$. The matrix representation of $-\Delta$ with respect to the standard basis of $\ell^2(\N)$ coincides with the tridiagonal Toeplitz matrix
\begin{equation}\label{eq:def.Lap}
	-\Delta=\begin{pmatrix}
		2 & -1 \\
		-1 & 2 & -1\\
		& -1 & 2 & -1\\
		& & \ddots & \ddots & \ddots
	\end{pmatrix}.
\end{equation}

It is well known that $-\Delta$ is \emph{subcritical} 
meaning that there exists a non-trivial
diagonal operator $V\geq0$ such that $-\Delta \geq V$
in the sense of quadratic forms. 
Indeed, one has the classical Hardy inequality
$
 -\Delta\geq V^{\text{H}},
$
where
\begin{equation}
 V^{\text{H}}_{n}:=\frac{1}{4n^{2}}, \quad n\in\N.
\label{eq:classical_hardy_weight}
\end{equation}
Interestingly, even though the constant $1/4$ 
in the Hardy weight is optimal, the shifted operator
$-\Delta-V^{\text{H}}$ is still subcritical. An improved Hardy-type inequality 
$-\Delta\geq V^{\text{KPP}}$ with
$$
V^{\text{KPP}}_{n}:=2-\sqrt{\frac{n-1}{n}}-\sqrt{\frac{n+1}{n}}, \quad n\in\N,
$$
was found only recently by Keller, Pinchover, and Pogorzelski in~\cite{Keller-Pinchover-Pogorzelski-2018}; see also~\cite{Krejcirik-Stampach-2022} for a simple proof.
Moreover, it is proved in these references that 
$-\Delta-V^{\text{KPP}}$ is \emph{critical} in the sense
of the spectral instability
$\inf \, \sigma(-\Delta-V^{\text{KPP}}+V) < 0$ 
for any non-trivial diagonal $V \leq 0$.
In fact, not only the criticality, but an optimality of the weight $V^{\text{KPP}}$ in a stronger sense was proven in~\cite{Keller-Pinchover-Pogorzelski-2018} together with more general results on discrete Laplacians on graphs; see also~\cite{Keller-Pinchover-Pogorzelski-2018-AMM,Keller-Pinchover-Pogorzelski-2020} and moreover~\cite{Devyver-Fraas-Pinchover-2014, Devyver-Pinchover-2016} for related works in the continuous framework.
We emphasize the contrast with the (continuous) Dirichlet Laplacian on the half line, where $-\Delta-V^{\text{H}}$ with the classical Hardy weight $V^{\text{H}}(x):=1/(4x^{2})$ is critical in $L^{2}(0,\infty)$.

While the (sub)criticality and the related Hardy-type inequalities for $-\Delta$ on $\ell^{2}(\N)$ are well understood, much less is known about their generalization to the discrete polyharmonic operator or any positive power of $-\Delta$. The  primary goal of this paper is to investigate the criticality or subcriticality of $(-\Delta)^\alpha$ depending on $\alpha>0$, and to make the first attempt towards
Hardy-type inequalities in the subcritical regimes. 
Except for a partial result in~\cite{Huang-Ye-2022-arxiv} for $\alpha =2$, the topic seems not to be studied for any non-trivial exponent $\alpha \neq 1$ so far.

The paper is organized as follows. In Subsection~\ref{subsec:main}, our main results are formulated as Theorems~\ref{thm:main1}, \ref{thm:main2}, and \ref{thm:main3}. Subsection~\ref{subsec:rel_liter} summarizes some relevant results on positive powers of Laplacians on the half line or the line in both the discrete and continuous settings. 
After Section~\ref{sec:prelim}, where preliminary results on powers of the Laplacian and their Green kernel are presented, the three main theorems are proven in Section~\ref{sec:proofs}. The particular case of the bilaplacian is further studied in Section~\ref{sec:bilap}. The paper is concluded by an appendix with proofs of auxiliary integral identities.

\subsection{Main results}\label{subsec:main}
In this paper, we adopt the following definition 
of subcriticality/criticality in terms of 
spectral stability/instability against small perturbations. 
Given any bounded self-adjoint operator $H$ on $\ell^{2}(\N)$,
we say that~$H$ is \emph{critical} if $\inf\sigma(H+V) < 0$
whenever $V \leq 0$ is non-trivial.
Generically throughout this paper, $V$ denotes a potential (or a weight), i.e.~a diagonal operator acting on $\ell^{2}(\N)$. As usual, our notation does not distinguish between a diagonal operator $V$ on $\ell^{2}(\N)$ and the sequence $V=\{V_{n}\}_{n=1}^{\infty}$ of its diagonal entries.
We say~$H$ is \emph{subcritical} if it is not critical,
which is equivalent to the existence of a non-trivial weight $V \geq 0$ 
such that $H \geq V$ in the sense of forms
(i.e.\ $\langle \psi,(H-V)\psi\rangle\geq0$ 
for all $\psi\in\ell^{2}(\N)$).

The question of criticality of $(-\Delta)^{\alpha}$ for $\alpha>0$ is answered by our first main result.

\begin{theorem}\label{thm:main1}
 Suppose $\alpha>0$. Then $(-\Delta)^{\alpha}$ is critical if and only if $\alpha\geq3/2$.
\end{theorem}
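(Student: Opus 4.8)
The plan is to treat the two implications separately and to reduce everything to properties of the Green kernel $G_\alpha$ of $(-\Delta)^\alpha$, which (by the preliminary section) will be available in a reasonably explicit form: it behaves like $n^{2\alpha-1}$ along the diagonal up to constants, and more precisely one expects the asymptotics $G_\alpha(m,n)\asymp \min(m,n)^{2\alpha-1}$ (or a comparable expression) for $\alpha<1/2<3/2$, after suitable sign considerations. Recall the standard criterion: a nonnegative self-adjoint $H$ with trivial kernel is subcritical if and only if it admits a positive (super)solution, i.e.\ a sequence $h>0$ with $(Hh)_n\ge 0$ for all $n$ that is not in $\ell^2$; equivalently $H$ is critical iff the only nonnegative superharmonic functions are (sub)multiples of the ground state, which then fails to be $\ell^2$-summable only marginally. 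So the whole argument splits into: (i) for $\alpha<3/2$, construct an explicit positive weight $V$ and an explicit positive $h$ with $(-\Delta)^\alpha h\ge Vh$, proving subcriticality — this is exactly what Theorem~\ref{thm:main2} (the Hardy inequality) will give, so for this direction I would simply invoke that theorem; (ii) for $\alpha\ge 3/2$, show criticality by proving that coupling with any nontrivial $V\le 0$ pushes the bottom of the spectrum below zero.

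For the hard direction, $\alpha\ge 3/2$, I would argue by a variational/test-function computation. Pick $V\le 0$ nontrivial, so $V_{n_0}<0$ for some $n_0$, and build a normalized trial sequence $\psi^{(L)}$ supported on $\{1,\dots,L\}$ that is ``as harmonic as possible'' for $(-\Delta)^\alpha$: concretely, take $\psi^{(L)}$ to be a truncation/regularization of the would-be ground state of $(-\Delta)^\alpha$, which grows like $n^{\alpha-1/2}$ (the borderline non-$\ell^2$ growth, since $\sum n^{2\alpha-1}=\infty$ precisely when $\alpha\ge 1/2$; the relevant threshold $3/2$ will come from the \emph{order of the difference operator}, see below). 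The point is to show $\langle\psi^{(L)},(-\Delta)^\alpha\psi^{(L)}\rangle\to 0$ as $L\to\infty$ while $\langle\psi^{(L)},(-V)\psi^{(L)}\rangle$ stays bounded below by a positive constant (because $\psi^{(L)}_{n_0}$ is bounded below). Then $\inf\sigma((-\Delta)^\alpha+V)<0$. Equivalently and perhaps more cleanly, one exhibits a genuine positive superharmonic function for $(-\Delta)^\alpha$ that \emph{is} in $\ell^2$ once $\alpha>3/2$ — no, that is false; rather the mechanism is that for $\alpha\ge 3/2$ there is \emph{no} positive superharmonic function outside $\ell^2$ with enough room, forcing criticality. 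I would phrase it via the trial-function inequality to avoid subtleties.

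I expect the main obstacle to be pinning down the precise growth/decay of the Green kernel $G_\alpha(m,n)$ and the associated ``harmonic'' sequences for non-integer $\alpha$, and in particular identifying why $3/2$ rather than $1/2$ is the threshold. The heuristic: $(-\Delta)^\alpha$ acts like a difference operator of order $2\alpha$, a power-sequence $n^\beta$ is annihilated to leading order when $\beta\in\{0,1,\dots\}$ but the relevant ``second solution'' near the natural boundary at infinity scales like $n^{2\alpha-1}$; the operator $(-\Delta)^\alpha$ applied to $n^{\gamma}$ produces $n^{\gamma-2\alpha}$ to leading order (plus lower-order corrections from the discrete nature), so a trial function $\psi_n\sim n^{\alpha-1/2}\chi_{[1,L]}(n)$ gives a Rayleigh quotient contribution $\sim L\cdot L^{2\alpha-1}\cdot L^{-2\alpha}=1$ from the bulk but — crucially — the boundary/truncation layer near $n=L$, whose width is effectively infinite because the kernel is non-local, contributes $\sim L^{1-2\alpha}\cdot(\text{something})$, and one needs the correct bookkeeping to see this tends to $0$ exactly when $\alpha<3/2$ fails, i.e.\ when $\alpha\ge 3/2$ the energy can be made to vanish. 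Nailing these constants and the non-local tail estimates — presumably using the integral representation of $(-\Delta)^\alpha$ and the explicit Green kernel from Section~\ref{sec:prelim} — is where the real work lies; the rest is the standard criticality machinery.
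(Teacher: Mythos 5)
Your split into the two implications is the right one, and delegating the subcritical direction $\alpha\in(0,3/2)$ to the Hardy inequality of Theorem~\ref{thm:main2} is legitimate (that theorem is proven independently, via a Hilbert--Schmidt bound on the Birman--Schwinger operator; the paper's own Step~(i) does the same thing with the cruder bound of Lemma~\ref{lem:unif.bound} and the weight $\gamma n^{-4}$). The critical direction, however, is where your proposal has genuine gaps rather than merely unfinished computations.

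First, the ``standard criterion'' you invoke at the outset --- subcriticality iff existence of a positive supersolution, criticality via uniqueness of the positive superharmonic function --- is part of Allegretto--Piepenbrink/ground-state theory and requires the operator to generate a positivity-preserving semigroup (equivalently, for a matrix operator, nonpositive off-diagonal entries). This fails for $(-\Delta)^{\alpha}$ with $\alpha>1$: e.g.\ the bilaplacian has the entry $+1$ at distance two, and formula~\eqref{eq:frac_lapl_matrix} shows off-diagonal entries of both signs in general. So the entire ``positive superharmonic function'' framework on which your heuristic rests is unavailable precisely in the regime you need it. Second, your variational sketch is internally inconsistent: if $\psi^{(L)}$ is $\ell^{2}$-normalized and grows like $n^{\alpha-1/2}$ up to $L$, then $\psi^{(L)}_{n_0}\sim n_0^{\alpha-1/2}L^{-\alpha}\to0$, so $\langle\psi^{(L)},(-V)\psi^{(L)}\rangle$ is \emph{not} bounded below; one must instead fix $\psi^{(L)}_{n_0}=1$ and prove $\langle\psi^{(L)},(-\Delta)^{\alpha}\psi^{(L)}\rangle\to0$, and your own bulk bookkeeping yields a quantity of order $1$ (in fact $\log L$), not $o(1)$. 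The decisive cancellation is never exhibited, and --- as you concede --- you never actually locate the threshold $3/2$. The paper's mechanism is different and much more elementary: by \eqref{eq:Green.alpha}, the diagonal Green kernel is $\frac{2}{\pi}\int_{-1}^{1}U_{n-1}^{2}(x)\sqrt{1-x^{2}}\,(2^{\alpha}(1-x)^{\alpha}-\lambda)^{-1}\dd x$, whose integrand at $\lambda=0$ behaves like $(1-x)^{1/2-\alpha}$ near $x=1$; hence the diagonal entries stay bounded as $\lambda\to0^{-}$ iff $\alpha<3/2$ and diverge to $+\infty$ iff $\alpha\ge3/2$ (Lemma~\ref{lem:sing}). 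Feeding this into the rank-one Birman--Schwinger condition $c\,((-\Delta)^{\alpha}-\lambda)^{-1}_{n,n}=1$ produces a negative eigenvalue of $(-\Delta)^{\alpha}-c\delta_{n}$ for every $c>0$ and every $n$ (Lemma~\ref{lem:ev}), and criticality follows since any admissible $V\ge0$ dominates $V_{n}\delta_{n}$. Without either this Green-kernel divergence or a genuinely completed test-function computation, your argument for $\alpha\ge3/2$ does not close.
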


The proof of Theorem~\ref{thm:main1} is given in Subsection~\ref{sec:proof1}.

\begin{remark}
Notice that $(-\Delta)^{\alpha}$ is bounded from above by $4^{\alpha}$ since $\sigma((-\Delta)^\alpha)=[0,4^{\alpha}]$. Therefore, one could also study the stability of the upper bound $4^{\alpha}$ when adding a perturbation $V\geq0$, i.e.~the criticality of the operator $4^{\alpha}-(-\Delta)^{\alpha}$
in our setting. 
We briefly comment on this in Subsection~\ref{subsec:crit_from_above}, where we prove $4^{\alpha}-(-\Delta)^{\alpha}$ to be subcritical for all $\alpha>0$. This result has no continuous analogue since the Dirichlet Laplacian on $L^{2}(0,\infty)$ is not bounded from above.
\end{remark}

For the particular case of the discrete bilaplacian $\Delta^{2}$ on $\ell^{2}(\N)$ it has already been observed in~\cite{Huang-Ye-2022-arxiv} that no direct analogue of the classical Rellich inequality holds, i.e.~that there exists \textbf{no} $c>0$ such that $\Delta^{2}\geq V^{\text{R}}$ with
$$
 V^{\text{R}}_{n}:=\frac{c}{n^{4}}, \quad n\in\N;
$$
this answers a question from~\cite{Gerhat-Krejcirik-Stampach-2023}.
Our Theorem~\ref{thm:main1} shows that there exists no discrete Rellich inequality on $\ell^{2}(\N)$, neither any Hardy-type inequality for the discrete polyharmonic operator $(-\Delta)^{k}$ on $\ell^{2}(\N)$ with $k=3,4\dots$.
This can be surprising when compared to the continuous setting, where the polyharmonic operator $(-\Delta)^{k}$ is subcritical in $L^{2}(0,\infty)$ for all $k \in \N$; see Subsection~\ref{subsec:rel_liter} below for more details.

Theorem~\ref{thm:main1} implies the existence of Hardy-type inequalities for $(-\Delta)^{\alpha}$ with $\alpha\in(0,3/2)$. A sufficient condition for admissible Hardy weights is given in the next theorem. For its formulation, we define the sequence of functions 
\begin{equation}\label{eq:def_g_n}
	g_n (\alpha) := \left(1-\frac{(\alpha)_{2n}}{(1-\alpha)_{2n}}\right)\tan(\pi\alpha)
\end{equation}
for $\alpha\in(0,3/2)$ and $n\in\N$, where $(\alpha)_{k}:=\alpha(\alpha+1)\dots(\alpha+k-1)$ is the Pochhhammer symbol. For $\alpha=1/2$ and $\alpha=1$, the values of $g_{n}(\alpha)$ are given by the respective limits
\begin{equation}\label{eq:g_n_1/2_1}
	g_n\left(\frac12\right) = \frac{2}{\pi} \sum_{j=1}^{2n} \frac{1}{2j-1} \quad\mbox{ and }\quad g_n\left(1\right) = 2\pi n.
\end{equation}
Notice that $g_{n}(\alpha)>0$ for all $\alpha\in(0,3/2)$ and $n\in\N$. In the sequel, $\Gamma$ denotes the Gamma function.

\begin{theorem}\label{thm:main2}
Let $\alpha\in(0,3/2)$. If a potential $V\geq0$ satisfies the condition
\begin{equation}
 \sum_{n=1}^{\infty}g_{n}(\alpha)V_{n}\leq 2\pi\frac{\Gamma(2\alpha)}{\Gamma^{2}(\alpha)}
\label{eq:V_suff_cond}
\end{equation}
where $g_{n}$ is as in~\eqref{eq:def_g_n} and~\eqref{eq:g_n_1/2_1}, then 
\[
 (-\Delta)^{\alpha}\geq V.
\]
\end{theorem}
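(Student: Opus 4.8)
The plan is to prove the quadratic form inequality $(-\Delta)^\alpha \geq V$ by reducing it, via the explicit Green kernel of $(-\Delta)^\alpha$, to a positivity statement about an integral operator, and then to extract from that positivity the scalar condition \eqref{eq:V_suff_cond}. Concretely, for $\alpha \in (0,3/2)$ the operator $(-\Delta)^\alpha$ is subcritical by Theorem~\ref{thm:main1}, so it possesses a positive minimal Green function $G^{(\alpha)} = ((-\Delta)^\alpha)^{-1}$ on $\ell^2(\N)$ (or at least a positive generalized ground state); I expect Section~\ref{sec:prelim} to supply an explicit formula for $G^{(\alpha)}_{m,n}$ in terms of Gamma functions / the Pochhammer symbols appearing in $g_n(\alpha)$. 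The inequality $(-\Delta)^\alpha \geq V$ is then equivalent, by the standard ground-state/Green-function substitution (writing $\psi = G^{(\alpha)} \phi$, or testing against the resolvent), to the statement that the symmetric kernel $G^{(\alpha)}_{m,n} - \bigl(G^{(\alpha)} \sqrt{V}\bigr)\text{-type}$ operator is nonnegative; more precisely, $(-\Delta)^\alpha \geq V$ holds iff the operator $I - \sqrt{G^{(\alpha)}}\, V \sqrt{G^{(\alpha)}}$ (or $I - \sqrt V\, G^{(\alpha)} \sqrt V$) is nonnegative on $\ell^2(\N)$.

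The key step is then to bound this perturbation operator. Since $V \geq 0$ is diagonal, $\sqrt V\, G^{(\alpha)} \sqrt V$ is a positive operator, and I would estimate its norm (or rather show $\sqrt V\, G^{(\alpha)} \sqrt V \leq I$) by exploiting a \emph{rank-one-type domination} of the Green kernel on the diagonal or by a Schur test. The natural mechanism producing a condition of the form $\sum_n g_n(\alpha) V_n \leq \text{const}$ is that $G^{(\alpha)}$ is dominated by a rank-one kernel built from the positive solution $h = \{h_n\}$ of $(-\Delta)^\alpha h = 0$ that is relevant here: one shows $G^{(\alpha)}_{m,n} \leq c\, h_m h_n$ for a suitable normalization, whence $\langle \phi, \sqrt V\, G^{(\alpha)} \sqrt V \phi\rangle \leq c\, \bigl(\sum_n h_n \sqrt{V_n}\,|\phi_n|\bigr)^2 \leq c\,\|\phi\|^2 \sum_n h_n^2 V_n$ by Cauchy--Schwarz. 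Matching $c\, h_n^2$ against $g_n(\alpha)/\bigl(2\pi \Gamma(2\alpha)/\Gamma^2(\alpha)\bigr)$ then yields exactly \eqref{eq:V_suff_cond}. The identities \eqref{eq:g_n_1/2_1} at $\alpha = 1/2, 1$ and the positivity $g_n(\alpha) > 0$ strongly suggest that $g_n(\alpha)$ is, up to the stated constant, precisely $h_n^2$ for the natural zero-energy solution, which would be the content of a preliminary lemma.

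I would therefore organize the argument as: (i) recall the explicit positive Green kernel / zero-solution formula from Section~\ref{sec:prelim}, identifying $g_n(\alpha)$ with the square of the relevant positive null sequence up to normalization $2\pi\Gamma(2\alpha)/\Gamma^2(\alpha)$; (ii) verify the diagonal (or full) rank-one domination $G^{(\alpha)}_{m,n} \lesssim g_m(\alpha) g_n(\alpha)$ with the sharp constant, using monotonicity/log-convexity properties of the Gamma function or the integral representations from the appendix; (iii) apply the ground-state substitution and Cauchy--Schwarz to convert the operator inequality into the scalar sum condition; and (iv) handle the limiting exponents $\alpha = 1/2$ and $\alpha = 1$ by continuity, confirming the formulas \eqref{eq:g_n_1/2_1}. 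The main obstacle I anticipate is step (ii): establishing the rank-one domination of the Green kernel with the \emph{optimal} constant (so that \eqref{eq:V_suff_cond} is as stated, not weaker) will require delicate estimates on ratios of Pochhammer symbols / Gamma functions, and one must be careful that the off-diagonal decay of $G^{(\alpha)}$ is genuinely controlled by the product $g_m(\alpha) g_n(\alpha)$ uniformly — this is where the auxiliary integral identities relegated to the appendix are likely to do the real work. A secondary subtlety is ensuring the ground-state substitution is legitimate for a merely bounded (not necessarily strictly positive-definite) operator, which is why invoking subcriticality from Theorem~\ref{thm:main1} at the outset is essential.
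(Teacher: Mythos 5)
Your overall architecture matches the paper's: the inequality $(-\Delta)^\alpha \ge V$ is reduced via the Birman--Schwinger principle to showing that $\sqrt{V}\,((-\Delta)^\alpha-\lambda)^{-1}\sqrt{V}$ has norm at most one, and the scalar condition \eqref{eq:V_suff_cond} is extracted from a product-type (``rank-one'') bound $|((-\Delta)^\alpha-\lambda)^{-1}_{m,n}| \le C\sqrt{g_m(\alpha)}\sqrt{g_n(\alpha)}$ combined with a Cauchy--Schwarz step (the paper estimates the Hilbert--Schmidt norm, which yields the same condition). Where your plan goes astray is the mechanism for the key step (ii). The sequence $g_n(\alpha)$ is \emph{not} the square of a positive zero-energy solution $h$ of $(-\Delta)^\alpha h=0$; up to the constant $2^{2-\alpha}\Gamma(2\alpha)/\Gamma^2(\alpha)$ it equals $I_n(\alpha)=\int_{-1}^1 U_{n-1}^2(x)\,(1-x)^{-\alpha}\sqrt{1-x^2}\dd x$, i.e.\ (up to normalization) the \emph{diagonal} Green kernel $((-\Delta)^\alpha)^{-1}_{n,n}$ at zero energy. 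Consequently the product domination is not a ground-state estimate requiring ``delicate estimates on ratios of Pochhammer symbols with the optimal constant''; it is an immediate application of the Cauchy--Schwarz inequality to the integral representation \eqref{eq:Green.alpha} (equivalently, $|A_{m,n}|\le\sqrt{A_{m,m}A_{n,n}}$ for the positive operator $A=((-\Delta)^\alpha-\lambda)^{-1}$ with $\lambda<0$), and the only genuinely nontrivial input is the closed-form evaluation of $I_n(\alpha)$ in Lemma~\ref{lem:cheb_id2} of the appendix. Two further points to tighten: working at $\lambda=0$ directly is delicate because $((-\Delta)^\alpha)^{-1}$ is unbounded ($0$ is the bottom of the essential spectrum), which is why the paper keeps $\lambda<0$ and proves a bound uniform in $\lambda$; and the non-strict inequality in \eqref{eq:V_suff_cond} does not give $\|K(\lambda)\|<1$, so the equality case requires a separate limiting argument (the paper uses $V(q)=qV$ with $q\to1^-$ and strong convergence of the forms).
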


The proof of Theorem~\ref{thm:main2} is worked out in Subsection~\ref{sec:proof2}.

In order to obtain more concrete Hardy-type inequalities in the subcritical regimes $\alpha\in(0,3/2)$, we apply Theorem~\ref{thm:main2} to deduce inequalities with Hardy weights which are suitable power functions. While our definition of subcriticality merely involves the existence of non-negative weights, we emphasize that the Hardy weights obtained in the theorem below are in fact strictly positive.

\begin{theorem}\label{thm:main3}
For every $\alpha\in(0,3/2)$ and $\varepsilon>0$, there exists a positive constant $\gamma=\gamma(\alpha,\varepsilon)$ such that 
\[
 (-\Delta)^{\alpha}\geq V,
\]
where
\begin{equation}
 V_{n}=V_{n}(\alpha,\varepsilon):=\frac{\gamma}{n^{\max(1,2\alpha)+\varepsilon}}.
\label{eq:power-form_Hardy_weight}
\end{equation}
(See~\eqref{eq:gamma_alpha_eps} below for an explicit formula for the constant $\gamma$.)
\end{theorem}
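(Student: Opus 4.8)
The plan is to derive Theorem~\ref{thm:main3} as a corollary of Theorem~\ref{thm:main2} by choosing the power-form weight $V_n = \gamma n^{-p}$ with $p := \max(1,2\alpha)+\varepsilon$ and then verifying that condition~\eqref{eq:V_suff_cond} can be met for a sufficiently small $\gamma>0$. Concretely, since $g_n(\alpha)>0$, inequality~\eqref{eq:V_suff_cond} holds with this $V$ provided
\[
 \gamma \sum_{n=1}^{\infty} \frac{g_n(\alpha)}{n^{p}} \leq 2\pi\,\frac{\Gamma(2\alpha)}{\Gamma^2(\alpha)},
\]
so the whole statement reduces to showing that the series $S(\alpha,\varepsilon):=\sum_{n\ge1} g_n(\alpha) n^{-p}$ is \emph{finite}; then one simply sets
\[
 \gamma = \gamma(\alpha,\varepsilon) := \frac{2\pi\,\Gamma(2\alpha)}{\Gamma^2(\alpha)\, S(\alpha,\varepsilon)},
\]
which is the formula to be recorded as~\eqref{eq:gamma_alpha_eps}, and applies Theorem~\ref{thm:main2}.

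The core of the argument is therefore an asymptotic estimate of $g_n(\alpha)$ as $n\to\infty$. Starting from~\eqref{eq:def_g_n}, I would rewrite the Pochhammer ratio via Gamma functions,
\[
 \frac{(\alpha)_{2n}}{(1-\alpha)_{2n}} = \frac{\Gamma(2n+\alpha)\,\Gamma(1-\alpha)}{\Gamma(\alpha)\,\Gamma(2n+1-\alpha)},
\]
and use the standard ratio asymptotics $\Gamma(z+a)/\Gamma(z+b) \sim z^{a-b}$ to get $(\alpha)_{2n}/(1-\alpha)_{2n} \sim C_\alpha (2n)^{2\alpha-1}$ for an explicit constant $C_\alpha = \Gamma(1-\alpha)/\Gamma(\alpha)$ (with $\Gamma(1-\alpha)$ interpreted via analytic continuation / reflection when $\alpha\ge1$ is not an integer, and the two special cases $\alpha=1/2,1$ handled separately using~\eqref{eq:g_n_1/2_1}). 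Feeding this into~\eqref{eq:def_g_n}, I expect
\[
 g_n(\alpha) = O\!\left(n^{\max(0,\,2\alpha-1)}\right), \qquad n\to\infty,
\]
more precisely $g_n(\alpha)$ stays bounded for $\alpha\in(0,1/2]$, grows like $n$ for $\alpha=1$, and like $n^{2\alpha-1}$ for $\alpha\in(1,3/2)$; the logarithmic growth in the $\alpha=1/2$ case from~\eqref{eq:g_n_1/2_1} is $O(\log n) = O(n^{\delta})$ for any $\delta>0$. In every case the exponent of $n$ in $g_n(\alpha)$ is strictly less than $p = \max(1,2\alpha)+\varepsilon$ (this is exactly why the $\varepsilon$ is there, and why the threshold is $\max(1,2\alpha)$ rather than something smaller), so $g_n(\alpha) n^{-p}$ is summable and $S(\alpha,\varepsilon)<\infty$.

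The main obstacle is bookkeeping rather than conceptual: one must handle the sign of $\tan(\pi\alpha)$ and of the factor $1 - (\alpha)_{2n}/(1-\alpha)_{2n}$ carefully across the ranges $\alpha\in(0,1/2)$, $\alpha\in(1/2,1)$, $\alpha\in(1,3/2)$ (where both factors change sign, but their product stays positive as already asserted after~\eqref{eq:g_n_1/2_1}), and treat the two limiting values $\alpha=1/2$ and $\alpha=1$ via~\eqref{eq:g_n_1/2_1}; additionally, to make $\gamma$ fully explicit one needs a clean closed or semi-closed form for $S(\alpha,\varepsilon)$, which will involve values of the Hurwitz zeta function or an ${}_3F_2$-type hypergeometric sum, so "explicit" here will mean "explicit in terms of such standard special functions." I would also remark that the $\varepsilon$ cannot be dropped at $\alpha\in(1,3/2)$: at $p=2\alpha$ the series diverges, consistently with criticality setting in at $\alpha=3/2$; and that for $\alpha\in(0,1)$ one could in fact take $\varepsilon=0$ up to replacing $n^{-1}$ by $n^{-1}(\log n)^{-1-\delta}$, but this refinement is not needed for the stated result.
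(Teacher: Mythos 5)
Your proposal is correct and follows essentially the same route as the paper: reduce to Theorem~\ref{thm:main2} with the power weight $V_n=\gamma n^{-\max(1,2\alpha)-\varepsilon}$, show $g_n(\alpha)=O(n^{\max(0,2\alpha-1)})$ (with the logarithm at $\alpha=1/2$), and read off $\gamma$ from the summability of $\sum_n g_n(\alpha)V_n$. The only difference is cosmetic: the paper replaces your Gamma-ratio asymptotics by explicit non-asymptotic upper bounds on $g_n(\alpha)$ (Lemma~\ref{lem:g_n_bounds}, proved via an elementary logarithm/integral comparison of the Pochhammer ratio), which lets it express $\gamma(\alpha,\varepsilon)$ in closed form through $\zeta(1+\varepsilon)$ and $\zeta'(1+\varepsilon)$ rather than through the semi-explicit sum $S(\alpha,\varepsilon)$.
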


Theorem~\ref{thm:main3} is proven in Subsection~\ref{sec:proof3}.

\begin{remark}
A comparison of~\eqref{eq:power-form_Hardy_weight} with the classical Hardy weight~\eqref{eq:classical_hardy_weight} for $\alpha=1$ shows that $V_{n}$ does not have the optimal decay rate due to the presence of the positive parameter $\varepsilon$. If one moreover compares with the decay rate of the continuous weight~\eqref{eq:V_BD} below, one may believe that, at least for $\alpha\in(1/2,3/2)$, the inequality of Theorem~\ref{thm:main3} remains true when $\varepsilon=0$ with some positive constant $\gamma=\gamma(\alpha)$. The necessity of $\varepsilon>0$ is implied by our approach. In the course of our proof, we use a uniform bound on the Green kernel of $(-\Delta)^{\alpha}$ which is likely not sharp (see Lemma~\ref{lem:unif.bound.refined}). As a result, we get constants $\gamma(\alpha,\varepsilon)$ which explode when $\varepsilon\to0^{+}$, as one can see from the explicit formula~\eqref{eq:gamma_alpha_eps}. We therefore do not try to optimize the constant $\gamma(\alpha,\varepsilon)$. An open problem, which currently seems to be out of reach, is whether one can find $V=V(\alpha)\geq0$ such that $(-\Delta)^{\alpha}-V$ is critical for every value $\alpha\in(0,3/2)$.
\end{remark}

\subsection{Relevant literature}\label{subsec:rel_liter}

We briefly discuss several closely related results and summarize the state of the art concerning mainly the criticality of positive powers of Laplacians on the half or the full line in both the discrete and the continuous settings.

\medskip
1) \textbf{Discrete polyharmonic operators on $H_0^k(\N)$.} 
Let $k \in \N$ and let $\{e_{n} \mid n\in\N\}$ denote the standard basis of $\ell^2(\N)$. When the discrete polyharmonic operator $(-\Delta)^{k}$ is restricted to the subspace $H_0^k(\N):=\{e_{1},\dots,e_{k-1}\}^{\perp}$ of $\ell^{2}(\N)$, there exist discrete analogues of the continuous Birman inequality~\cite{Birman-1961,Glazman-1965} for the polyharmonic operator $(-\Delta)^k$ in $L^2(0,\infty)$, i.e.~the inequality $(-\Delta)^k \ge V^{{\rm B},k}$ with the weight
	\begin{equation}\label{eq:poly.weight}
		V^{{\rm B},k} (x) := \frac{((2k)!)^2}{16^k(k!)^2} \frac{1}{x^{2k}},
	\end{equation}
see also~\cite{Gesztesy-Littlejohn-Michael-Wellman-2018} for a recent proof. The discrete version of the Birman inequality $(-\Delta)^{k}\geq V^{\rm B,k}$ on $H_0^k(\N)$, where $V^{\rm B, k}_{n}$ is as in~\eqref{eq:poly.weight} with $x$ replaced by $n$, was proven in~\cite{Huang-Ye-2022-arxiv} (while it was deduced with a smaller constant in the PhD thesis ~\cite{Gupta-2023-PhD}, see also~\cite{Gupta-2021-arxiv}).
An improved discrete Birman inequality on $H_0^k(\N)$ with a weight strictly larger than $V^{\rm B,k}$ has been only conjectured in~\cite{Gerhat-Krejcirik-Stampach-2023} for $k\geq3$.

\medskip
2) \textbf{The discrete bilaplacian on $H_0^2(\N)$.} 
More is known about improved inequalities for the discrete bilaplacian $\Delta^{2}$ on $H^2_0(\N)$. In~\cite{Gerhat-Krejcirik-Stampach-2023}, the discrete Rellich inequality $\Delta^2 \ge V^{\rm GKS}$ was derived on $H^2_0(\N)$ with
	\begin{equation}
		V^{\rm GKS}_n := 6 - 4 \left(1 + \frac1n\right)^\frac32 - 4 \left(1 - \frac1n\right)^\frac32 + \left(1 + \frac2n\right)^\frac32 + \left(1 - \frac2n\right)^\frac32, \quad n \in \N.
	\end{equation}
This weight is asymptotically equal but strictly larger than the discrete analogue of $V^{\rm B, 2}$.
Further improvements upon $V^{\rm GKS}$ were obtained only recently in~\cite{Huang-Ye-2022-arxiv}. Nevertheless, a critical (or even optimal) discrete Rellich weight on $H_{0}^{2}(\N)$ remains unknown at the moment.

\medskip
3) \textbf{Discrete fractional Laplacians on $\ell^{2}(\Z)$.} When the discrete Laplacian $-\Delta$ is considered on the full line~$\Z$, the picture is more complete. For fractional powers $\alpha \in (0,1/2)$, it was shown in~\cite{Ciaurri-Roncal-2018} that $(-\Delta)^\alpha \ge V^{{\rm CR},\alpha}$ on $\ell^2(\Z)$ with the weight
		\begin{equation}\label{eq:V_CR}
			V^{{\rm CR},\alpha}_n := 4^\alpha \frac{\Gamma^2\left(\frac{1+2\alpha}{4}\right)}{\Gamma^2\left(\frac{1-2\alpha}{4}\right)}\frac{\Gamma \left(|n| + \frac{1-2 \alpha}{4}\right) \Gamma \left(|n| + \frac{3-2 \alpha}{4}\right)}{\Gamma \left(|n| + \frac{1+2 \alpha}{4}\right) \Gamma \left(|n| + \frac{3+2 \alpha}{4}\right)}, \quad n \in \Z.
		\end{equation}
Interestingly, the weight $V^{{\rm CR},\alpha}$ turns out to be optimal which was proven only recently in~\cite{Keller-Nietschmann-2023}. It particularly follows that $(-\Delta)^{\alpha}-V^{\rm CR, \alpha}$ is critical for all $\alpha\in(0,1/2)$. Although this seems not to be mentioned in the papers \cite{Ciaurri-Roncal-2018, Keller-Nietschmann-2023}, we remark without a proof that, for $\alpha\geq1/2$, the operator $(-\Delta)^\alpha$ is critical. This can be verified by 
the same method which we use in the proof of Theorem~\ref{thm:main1}. It means that there are no Hardy-type inequalities for $(-\Delta)^\alpha$ on $\ell^{2}(\Z)$ when $\alpha\geq1/2$, completing the picture of positive powers of the discrete Laplacian on $\Z$.

\medskip
4) \textbf{Fractional Laplacians in $L^{2}(0,\infty)$.} 
The subcriticality of the polyharmonic operators in $L^2(0,\infty)$ due to the Birman inequalities~\cite{Birman-1961,Glazman-1965,Gesztesy-Littlejohn-Michael-Wellman-2018} mentioned in point 2) are further complemented by inequalities $(-\Delta)^\alpha \ge V^{{\rm BD},\alpha}$ proved for fractional powers $\alpha\in(0,1)$ in~\cite{Bogdan-Dyda-2011}, where 
\begin{equation}
 V^{{\rm BD},\alpha}(x):=\frac{c_{\alpha}}{x^{2\alpha}}
\label{eq:V_BD}
\end{equation}
with a constant $c_{\alpha}\geq0$; see~\cite{Bogdan-Dyda-2011} for an explicit formula. The constant $c_{\alpha}$ is positive  if $\alpha\neq1/2$, implying $(-\Delta)^\alpha$ to be subcritical for $\alpha\in(0,1)\setminus\{1/2\}$.

\medskip
5) \textbf{Fractional Laplacians in $L^{2}(\R)$.} 
The continuous setting on the full line resembles its discrete analogue on $\Z$. Indeed, for fractional powers $\alpha \in (0,1/2)$, an inequality $(-\Delta)^\alpha \ge V^{{\rm He},\alpha}$ holds in $L^2(\R)$ with the weight
\begin{equation}
	V^{{\rm He},\alpha} (x) := 4^\alpha \frac{\Gamma^2\left(\frac{1+2\alpha}{4}\right)}{\Gamma^2\left(\frac{1-2\alpha}{4}\right)}\frac{1}{x^{2\alpha}},
\end{equation}
see~\cite{Herbst-1977} and also~\cite{Beckner-1995,Yafaev-1999}. This is in line with the discrete setting on $\ell^2(\Z)$, where the leading term in the asymptotic expansion of~\eqref{eq:V_CR} as $n \to \infty$ is equal to the discrete analogue of $V^{{\rm He},\alpha}$.

\medskip
6) \textbf{Powers of Laplacians in the higher dimensional setting.} An asymptotic behavior of optimal constants in the discrete Hardy and Rellich inequalities on $\Z^{d}$ is studied in~\cite{Gupta-2023} for $d\to\infty$. For further numerous research works related to fractional Laplacians defined on various subspaces of $L^{2}(\Omega)$ on open domains $\Omega\subset\R^{d}$ and general dimension $d\geq1$, we refer to the recent review~\cite{Frank-2018} and references therein.

\section{Preliminaries}\label{sec:prelim}

\subsection{Powers of the discrete Laplacian on $\ell^{2}(\N)$}

We give more details on the general properties of the discrete Laplacian and its positive powers on $\ell^{2}(\N)$, mainly their diagonalization and matrix representation.

First, one can employ basic properties of Chebyshev polynomials of the second kind $U_{n}$ to diagonalize the discrete Laplacian $-\Delta$. Recall that the sequence of Chebyshev polynomials $U_{n}$ is determined by the recurrence
\begin{equation}
 U_{n+1}(x)-2x U_{n}(x)+U_{n-1}(x)=0, \quad n\in\N,
\label{eq:recur_cheb}
\end{equation}
with the initial setting $U_{0}(x):=1$ and $U_{1}(x):=2x$; we refer the reader to~\cite[Sec.~10.11]{Erdelyi_vol2} for general properties of Chebyshev polynomials. Further, the set of functions $\{\sqrt{2/\pi}\,U_{n} \mid n\in\N_{0}\}$ forms an orthonormal basis in the Hilbert space $L^2 ((-1,1), \sqrt{1-x^2} \, \dd x)$. Therefore, the mapping~$\mathcal{U}$ defined as
\begin{equation}\label{eq:def_Uu_b}
	\mathcal U  e_n  := \sqrt \frac{2}{\pi}\, U_{n-1}, \quad n \in \N,
\end{equation}
where $e_{n}$ is the $n$-th vector of the standard basis of $\ell^{2}(\N)$, extends to a unitary operator $\mathcal U : \ell^2(\N) \to L^2 ((-1,1), \sqrt{1-x^2} \, \dd x)$.
With the aid of~\eqref{eq:recur_cheb}, it is straightforward to verify that 
\begin{equation}\label{eq:laplacian_diagonalized}
	\Uu(-\Delta)\Uu^{-1} = M_{2(1-x)},
\end{equation}
where $M_{f(x)}$ denotes the multiplication operator by a measurable function $f$ in $L^2 \big((-1,1), \sqrt{1-x^2} \, \dd x\big)$. 

From this observation, the spectral representation of $(-\Delta)^{\alpha}$ readily follows. Moreover, one can also compute the matrix representation of $(-\Delta)^{\alpha}$ with respect to the standard basis of $\ell^{2}(\N)$ which turns out to be a particular Hankel plus Toeplitz matrix. The formula for the matrix elements of $(-\Delta)^{\alpha}$ is of no explicit use is this paper but can be of independent interest.

\begin{proposition}
 Let $\alpha>0$. Then
 \[ 
 (-\Delta)^{\alpha} = \Uu^{-1}M_{2^{\alpha}(1-x)^{\alpha}}\Uu.
 \]
 Further, for $m,n\in\N$, the matrix entries of $(-\Delta)^{\alpha}$ read
 \begin{equation}
 (-\Delta)^{\alpha}_{m,n}=(-1)^{m+n}\left[\binom{2\alpha}{\alpha+m-n}-\binom{2\alpha}{\alpha+m+n}\right],
 \label{eq:frac_lapl_matrix}
 \end{equation}
 where the generalized binomial number is defined by the formula
 \begin{equation}\label{binomial}
  \binom{a}{b}:=\frac{\Gamma(a+1)}{\Gamma(b+1)\Gamma(a-b+1)}.
 \end{equation} 
 (Recall that the reciprocal Gamma function is an entire function vanishing at the points $0,-1,-2,\dots$.)
\end{proposition}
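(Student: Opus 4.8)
The plan is to establish the spectral representation first and then extract the matrix entries by a direct Fourier-type computation on the Chebyshev basis.

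\medskip
\textbf{Step 1: the functional calculus formula.} Starting from the unitary equivalence \eqref{eq:laplacian_diagonalized}, namely $\Uu(-\Delta)\Uu^{-1}=M_{2(1-x)}$, I would invoke the standard fact that under a unitary conjugation the spectral (Borel) functional calculus is transported in the obvious way: for any bounded Borel function $\varphi$ on $\sigma(-\Delta)=[0,4]$ one has $\Uu\,\varphi(-\Delta)\,\Uu^{-1}=M_{\varphi(2(1-x))}$. Since $2(1-x)\in[0,4]$ for $x\in(-1,1)$ and $\varphi(t)=t^{\alpha}$ is continuous and nonnegative there, applying this with $\varphi(t)=t^{\alpha}$ gives $\Uu(-\Delta)^{\alpha}\Uu^{-1}=M_{(2(1-x))^{\alpha}}=M_{2^{\alpha}(1-x)^{\alpha}}$, which rearranges to $(-\Delta)^{\alpha}=\Uu^{-1}M_{2^{\alpha}(1-x)^{\alpha}}\Uu$. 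This part is essentially bookkeeping once \eqref{eq:laplacian_diagonalized} is in hand.

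\medskip
\textbf{Step 2: the matrix entries.} By definition of $\Uu$ in \eqref{eq:def_Uu_b}, the matrix entry is the inner product in $L^{2}((-1,1),\sqrt{1-x^{2}}\dd x)$:
\[
 (-\Delta)^{\alpha}_{m,n}=\langle e_{m},(-\Delta)^{\alpha}e_{n}\rangle
 =\frac{2}{\pi}\int_{-1}^{1}2^{\alpha}(1-x)^{\alpha}\,U_{m-1}(x)\,U_{n-1}(x)\,\sqrt{1-x^{2}}\dd x.
\]
The natural substitution is $x=\cos\theta$, $\theta\in(0,\pi)$, under which $\sqrt{1-x^{2}}\dd x=\sin^{2}\theta\dd\theta$, $U_{k-1}(\cos\theta)=\sin(k\theta)/\sin\theta$, and $1-x=1-\cos\theta=2\sin^{2}(\theta/2)$. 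The integral becomes, up to constants, $\int_{0}^{\pi}(2\sin^{2}(\theta/2))^{\alpha}\,\sin(m\theta)\sin(n\theta)\dd\theta$, which after writing $2\sin^{2}(\theta/2)=1-\cos\theta$ and $\sin(m\theta)\sin(n\theta)=\tfrac12(\cos((m-n)\theta)-\cos((m+n)\theta))$ reduces to evaluating the real trigonometric moments $\int_{0}^{\pi}(1-\cos\theta)^{\alpha}\cos(k\theta)\dd\theta$ for integer $k$. Equivalently, extending to $(-\pi,\pi)$ and using $1-\cos\theta=-\tfrac12(e^{\ii\theta}-1)(e^{-\ii\theta}-1)=|1-e^{\ii\theta}|^{2}/2$, these are Fourier coefficients of $|1-e^{\ii\theta}|^{2\alpha}$; the classical formula
\[
 \frac{1}{2\pi}\int_{-\pi}^{\pi}|1-e^{\ii\theta}|^{2\alpha}e^{-\ii k\theta}\dd\theta=(-1)^{k}\binom{2\alpha}{\alpha+k}
\]
(obtained, e.g., by expanding $(1-e^{\ii\theta})^{\alpha}(1-e^{-\ii\theta})^{\alpha}$ via the binomial series, multiplying, and picking out the $e^{\ii k\theta}$ coefficient as a hypergeometric $\phantom{}_{2}F_{1}$ evaluable by Gauss's theorem) then yields \eqref{eq:frac_lapl_matrix} after collecting the $k=m-n$ and $k=m+n$ terms and tracking the sign $(-1)^{m+n}$ coming from $(-1)^{m-n}$ (note $(-1)^{m-n}=(-1)^{m+n}$).

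\medskip
\textbf{Main obstacle.} The functional-calculus step is routine; the real work is the Fourier coefficient identity for $|1-e^{\ii\theta}|^{2\alpha}$ and in particular the clean closed form in terms of a single generalized binomial coefficient. One must justify interchanging the (non-absolutely-convergent for general real $\alpha$, at the endpoint) series with the integral or, more robustly, recognize the resulting sum $\sum_{j}\binom{\alpha}{j}\binom{\alpha}{j+k}$ and evaluate it via the Chu--Vandermonde / Gauss summation, then simplify the ratio of Gamma functions to the form \eqref{binomial}. Care is also needed with the reciprocal Gamma function vanishing when $\alpha+m+n$ or $\alpha+m-n$ is a nonpositive integer (relevant when $\alpha\in\N$), which is exactly why the remark about $1/\Gamma$ being entire is appended; the formula remains valid by continuity in $\alpha$. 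I would either present this as a self-contained computation or defer the integral identity to the announced appendix on auxiliary integral identities.
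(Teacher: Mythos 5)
Your proposal is correct, and its skeleton coincides with the paper's: the functional calculus transported through the unitary $\Uu$ gives the first claim, and the matrix entries are the weighted Chebyshev integrals $\frac{2^{\alpha+1}}{\pi}\int_{-1}^{1}(1-x)^{\alpha}U_{m-1}(x)U_{n-1}(x)\sqrt{1-x^{2}}\dd x$, reduced via $x=\cos\theta$ and the product-to-sum identity to the moments $\int_{0}^{\pi}(1-\cos\theta)^{\alpha}\cos(k\theta)\dd\theta$ for $k=m\mp n$. The only genuine divergence is in how that last integral is evaluated: the paper rewrites it as $\int_{0}^{\pi/2}\sin^{2\alpha}(\varphi)\cos(2\ell\varphi)\dd\varphi$ and quotes the tabulated identity \eqref{eq:GR.1} (Gradshteyn--Ryzhik 3.631.8), whereas you recognize the moments as Fourier coefficients of $\lvert 1-\e^{\ii\theta}\rvert^{2\alpha}$ and compute them by squaring the binomial series and summing $\sum_{j}\binom{\alpha}{j}\binom{\alpha}{j+k}=\binom{2\alpha}{\alpha+k}$ via Chu--Vandermonde/Gauss. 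Your route is self-contained and, for the case $\alpha>0$ actually asserted in the proposition, fully rigorous: since $\binom{\alpha}{j}=O(j^{-\alpha-1})$, the binomial series converges absolutely, so the interchange you worry about is unproblematic and the Cauchy product is legitimate (your caution about conditional convergence is only relevant for the extension to negative $\alpha$, which the paper handles instead by an identity-theorem argument in $\alpha$ on the half-plane $\re\alpha>-3/2$, needed later in the appendix but not for this proposition). The paper's route buys brevity by outsourcing the key integral to a table; yours buys a transparent explanation of why the generalized binomial coefficients appear, and it matches the standard computation of the symbol of the fractional Laplacian on $\Z$. Your handling of the sign $(-1)^{m-n}=(-1)^{m+n}$ and of the vanishing of $1/\Gamma$ at nonpositive integers is also consistent with the statement.
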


\begin{proof}
The first claim follows readily from~\eqref{eq:laplacian_diagonalized} and the functional calculus for self-adjoint operators.

Furthermore, the first claim implies that for the matrix entries $(-\Delta)^{\alpha}_{m,n}=\langle e_{m},(-\Delta)^{\alpha}e_{n}\rangle$ we have the integral representation
$$
 (-\Delta)^{\alpha}_{m,n}=\frac{2^{\alpha+1}}{\pi}\int_{-1}^{1} (1-x)^{\alpha}U_{m-1}(x)U_{n-1}(x) \sqrt{1-x^2} \dd x, \quad m,n\in\N.
$$
Formula~\eqref{eq:frac_lapl_matrix} follows from an explicit calculation of the above integral which is postponed to the Appendix, see Lemma~\ref{lem:cheb_id1}.
\end{proof}

As an immediate corollary of the last proposition, we state an integral representation for the Green kernel of $(-\Delta)^{\alpha}$.

\begin{corollary}
 Let $\alpha>0$. Then we have
 \begin{equation}\label{eq:Green.alpha}
 ((-\Delta)^\alpha - \lambda)^{-1}_{m,n} = \frac2\pi \int_{-1}^1 \frac{U_{m-1}(x) U_{n-1} (x)}{2^\alpha(1-x)^\alpha - \lambda}  \sqrt{1-x^2} \dd x
\end{equation}
for all $m,n\in\N$ and $\lambda \notin [0,4^{\alpha}]$.
\end{corollary}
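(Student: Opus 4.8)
The plan is to read off the resolvent directly from the spectral representation established in the preceding proposition, and then expand the relevant matrix element in the orthonormal basis of Chebyshev polynomials furnished by the unitary $\Uu$.

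First I would note that the proposition gives $(-\Delta)^{\alpha} = \Uu^{-1} M_{2^{\alpha}(1-x)^{\alpha}} \Uu$, hence for any $\lambda \in \C$ we have $(-\Delta)^{\alpha} - \lambda = \Uu^{-1} M_{2^{\alpha}(1-x)^{\alpha} - \lambda} \Uu$. Since $\sigma((-\Delta)^{\alpha}) = [0,4^{\alpha}]$ and, as $x$ ranges over $(-1,1)$, the function $2^{\alpha}(1-x)^{\alpha}$ ranges over $(0,4^{\alpha})$, for $\lambda \notin [0,4^{\alpha}]$ the function $x \mapsto (2^{\alpha}(1-x)^{\alpha} - \lambda)^{-1}$ is bounded on $(-1,1)$ (indeed bounded by $\dist(\lambda,[0,4^{\alpha}])^{-1}$). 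Therefore $M_{2^{\alpha}(1-x)^{\alpha} - \lambda}$ is boundedly invertible with inverse $M_{(2^{\alpha}(1-x)^{\alpha} - \lambda)^{-1}}$, and consequently
\[
 ((-\Delta)^{\alpha} - \lambda)^{-1} = \Uu^{-1} M_{(2^{\alpha}(1-x)^{\alpha} - \lambda)^{-1}} \Uu .
\]

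Next I would compute the matrix entry by unitarity of $\Uu$ and the definition~\eqref{eq:def_Uu_b}. Writing the inner product of $L^{2}((-1,1),\sqrt{1-x^{2}}\dd x)$ and using $\Uu e_{k} = \sqrt{2/\pi}\,U_{k-1}$, we get
\[
 ((-\Delta)^{\alpha} - \lambda)^{-1}_{m,n} = \big\langle \Uu e_{m},\, M_{(2^{\alpha}(1-x)^{\alpha} - \lambda)^{-1}} \Uu e_{n} \big\rangle = \frac{2}{\pi} \int_{-1}^{1} \frac{U_{m-1}(x) U_{n-1}(x)}{2^{\alpha}(1-x)^{\alpha} - \lambda} \sqrt{1-x^{2}} \dd x,
\]
which is the claimed formula~\eqref{eq:Green.alpha}; the integral converges absolutely because the integrand is a bounded function times the integrable weight $\sqrt{1-x^{2}}$ times the polynomial factor $U_{m-1}U_{n-1}$, all on the finite interval $(-1,1)$.

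There is essentially no obstacle here: the statement is a routine consequence of the functional calculus once the spectral representation of $(-\Delta)^{\alpha}$ is in hand, and the only point requiring a word of care is verifying that the multiplication operator $M_{(2^{\alpha}(1-x)^{\alpha}-\lambda)^{-1}}$ is bounded, which is exactly where the hypothesis $\lambda \notin [0,4^{\alpha}]$ enters.
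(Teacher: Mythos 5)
Your argument is correct and is precisely the reasoning the paper has in mind: the corollary is presented there as an immediate consequence of the proposition, via the diagonalization $(-\Delta)^{\alpha}=\Uu^{-1}M_{2^{\alpha}(1-x)^{\alpha}}\Uu$, the functional calculus (boundedness of the inverse multiplier for $\lambda\notin[0,4^{\alpha}]$), and the computation of matrix entries in the Chebyshev basis exactly as in the proof of the proposition. Nothing is missing; your remark on where the hypothesis $\lambda\notin[0,4^{\alpha}]$ enters is the only point of care and you handle it correctly.
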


\begin{remark}\label{rem:neg.alp}
Although negative powers $\alpha$ are not in the scope of the current paper, we remark on a possible extension of~\eqref{eq:frac_lapl_matrix} to $\alpha<0$, in which case $(-\Delta)^{\alpha}$ is an unbounded operator. Investigating the convergence of the resulting integrals, one sees that the Chebyshev polynomials $U_{n}$ belong to the domain or form domain of $M_{2^{\alpha}(1-x)^{\alpha}}$, respectively, if and only if $\alpha>-3/4$ or $\alpha>-3/4$. The same conditions thus hold for the standard basis vectors in $\ell^2(\N)$ to lie in the domain or form domain of $(-\Delta)^{\alpha}$. Formula~\eqref{eq:frac_lapl_matrix}  remains valid even for $\alpha>-3/2$ with the left-hand side interpreted as the corresponding quadratic form. For the apparent singularities $\alpha=-1/2$ and $\alpha=-1$, the right-hand side of~\eqref{eq:frac_lapl_matrix} is to be understood as the respective limit
\begin{align}
	(-\Delta)^{-1/2}_{m,n} & = \frac{(-1)^{m+n}}{2} \left[ \frac{ \psi \left(\frac12+m+n\right) + \psi \left(\frac12-m-n\right) }{ \Gamma \left(\frac12+m+n\right) \Gamma \left(\frac12-m-n\right)} - \frac{\psi \left(\frac12+m-n\right) + \psi \left(\frac12-m+n\right)}{\Gamma \left(\frac12+m-n\right) \Gamma \left(\frac12-m+n\right)} \right], \\
	(-\Delta)^{-1}_{m,n} & = \min(m,n),
\end{align}
where $\psi:=\Gamma'/\Gamma$ is the digamma function.
\end{remark}

\subsection{Uniform bounds on the Green kernel}

An important ingredient to our proof of Theorem~\ref{thm:main1} is a bound on the modulus of the Green kernel of $(-\Delta)^{\alpha}$ which is uniform in the spectral parameter. We prove two such bounds. First, a rather rough  but sufficient bound for the proof of subcriticality of $(-\Delta)^{\alpha}$ for $\alpha\in(0,3/2)$. Second, a refined estimate which will be used in the proof of Theorem~\ref{thm:main2}.

\begin{lemma}\label{lem:unif.bound}
Let $\alpha \in (0,3/2)$. Then there exists a constant $C_\alpha >0$ such that, for all $m,n\in\N$ and $\lambda<0$, we have 
\begin{equation}\label{eq:BS.bound}
	|((-\Delta)^\alpha - \lambda)^{-1}_{m,n}| \le C_\alpha mn.
\end{equation}
\end{lemma}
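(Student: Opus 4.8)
The plan is to start from the integral representation \eqref{eq:Green.alpha} for the Green kernel and bound it uniformly in $\lambda<0$. For $\lambda<0$ the denominator $2^\alpha(1-x)^\alpha-\lambda$ is strictly positive on $(-1,1)$ and satisfies $2^\alpha(1-x)^\alpha-\lambda \ge 2^\alpha(1-x)^\alpha$, so we may bound
\[
|((-\Delta)^\alpha-\lambda)^{-1}_{m,n}| \le \frac{2}{\pi}\int_{-1}^1 \frac{|U_{m-1}(x)||U_{n-1}(x)|}{2^\alpha(1-x)^\alpha}\sqrt{1-x^2}\,\dd x.
\]
The right-hand side no longer depends on $\lambda$, and the whole task reduces to showing this integral is finite and bounded by $C_\alpha\, mn$. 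The only place where integrability could fail is near $x=1$, where $(1-x)^{-\alpha}\sqrt{1-x^2}\sim (1-x)^{1/2-\alpha}$; since $\alpha<3/2$ this exponent exceeds $-1$, so the integral converges. (Near $x=-1$ the weight $\sqrt{1-x^2}$ vanishes and there is no issue.)

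The next step is to extract the factor $mn$. I would use the elementary uniform bound $|U_{k}(x)|\le k+1$ for $x\in[-1,1]$, which follows immediately from $U_k(\cos\theta)=\sin((k+1)\theta)/\sin\theta$ and $|\sin((k+1)\theta)|\le (k+1)|\sin\theta|$. Hence $|U_{m-1}(x)||U_{n-1}(x)|\le mn$, and therefore
\[
|((-\Delta)^\alpha-\lambda)^{-1}_{m,n}| \le mn\cdot \frac{2}{\pi}\int_{-1}^1 \frac{\sqrt{1-x^2}}{2^\alpha(1-x)^\alpha}\,\dd x =: C_\alpha\, mn,
\]
with $C_\alpha$ finite precisely because $\alpha<3/2$, as noted above. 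In fact the remaining integral can be evaluated in closed form via a Beta integral, giving $C_\alpha = 2^{1-\alpha}\Gamma(3/2-\alpha)\Gamma(3/2)/(\sqrt\pi\,\Gamma(3-\alpha))$ or similar, but this explicit value is not needed here — it is enough to know $C_\alpha<\infty$.

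There is essentially no hard obstacle: the argument is a one-line positivity estimate on the denominator followed by the crude Chebyshev bound and a convergent Beta-type integral. The only subtlety is making sure the constant is uniform in $\lambda$ — which is automatic once the denominator has been bounded below by its $\lambda$-independent part — and confirming that the constraint $\alpha<3/2$ is exactly what makes $\int_{-1}^1 (1-x)^{1/2-\alpha}\,\dd x$ converge. The sharper estimate promised for the proof of Theorem~\ref{thm:main2} (Lemma~\ref{lem:unif.bound.refined}) will require a more careful treatment of the oscillation of the Chebyshev polynomials rather than the crude $|U_k|\le k+1$, but for the present lemma the crude bound suffices.
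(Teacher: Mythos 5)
Your proposal is correct and follows essentially the same route as the paper: bound the denominator below by its $\lambda$-independent part (equivalently, observe the integrand is monotone in $\lambda<0$), note the resulting Beta-type integral converges precisely because $\alpha<3/2$, and use the crude bound $\|U_{k}\|_\infty = k+1$ to extract the factor $mn$. The only cosmetic difference is that the paper derives $|U_k(\cos\theta)|\le k+1$ by writing $U_k$ as a sum of $k+1$ complex exponentials, whereas you invoke $|\sin((k+1)\theta)|\le(k+1)|\sin\theta|$ — both are standard and equally valid.
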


\begin{proof}
Observe that the modulus of the integrand in the integral representation~\eqref{eq:Green.alpha} is an increasing function of $\lambda<0$. Therefore we may estimate it from above by taking $\lambda=0$. The resulting integral remains convergent due to the assumption $\alpha \in (0,3/2)$. This reasoning yields the upper estimate
\[
	|((-\Delta)^\alpha - \lambda)^{-1}_{m,n}|  \le \frac{2}{\pi} \int_{-1}^1 \frac{|U_{m-1}(x) U_{n-1}(x)|}{2^\alpha (1-x)^\alpha - \lambda} \sqrt{1-x^2} \dd x \le C_\alpha \|U_{m-1}\|_{\infty} \|U_{n-1}\|_{\infty}
\]
with the positive constant 
\begin{equation}\label{eq:Green.unif.C}
	C_\alpha := \frac1{2^{\alpha-1}\pi} \int_{-1}^1\frac{\sqrt{1-x^2}}{(1-x)^\alpha} \dd x < \infty,
\end{equation}
and where 
\[
 \|U_{n}\|_{\infty}:=\max_{x\in[-1,1]}|U_{n}(x)|.
\]

We conclude the proof by showing that 
\[
\|U_n\|_{\infty}\leq n+1
\]
for all $n\in\N_{0}$, where the above is actually an equality since $U_{n}(1)=n+1$. Using identity ~\cite[Equ.~(2), Sec.~10.11]{Erdelyi_vol2}
\begin{equation}
 U_{n}(\cos\theta)=\frac{\sin(n+1)\theta}{\sin\theta},
\label{eq:U_n_cos}
\end{equation}
we obtain the expression
\[
 U_{n}(\cos\theta)=\frac{e^{\ii(n+1)\theta}-e^{-\ii(n+1)\theta}}{e^{\ii\theta}-e^{-\ii\theta}}=e^{-\ii n\theta}\sum_{k=0}^{n}e^{2\ii k\theta},
\]
from which we immediately deduce that
\[
|U_{n}(\cos\theta)|\leq\sum_{k=0}^{n}1=n+1
\]
for all $\theta\in(0,\pi)$ and $n\in\N_{0}$. The proof is complete.
\end{proof}

\begin{lemma}\label{lem:unif.bound.refined}
Let $\alpha \in (0,3/2)$. Then for all $m,n\in\N$ and $\lambda<0$ we have
\begin{equation}\label{eq:Green.bound.In}
	|((-\Delta)^\alpha - \lambda)^{-1}_{m,n}| \le \frac{1}{2\pi}\frac{\Gamma^{2}(\alpha)}{\Gamma(2\alpha)} \sqrt{g_m (\alpha)} \sqrt{g_n(\alpha)},
\end{equation}
where $g_{n}$ is as in~\eqref{eq:def_g_n} and~\eqref{eq:g_n_1/2_1}.
\end{lemma}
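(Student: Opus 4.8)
The plan is to refine the crude $L^\infty$ estimate of Lemma~\ref{lem:unif.bound} by working in an $L^2$-type fashion rather than estimating each Chebyshev factor by its supremum. As in the proof of Lemma~\ref{lem:unif.bound}, I would start from the integral representation~\eqref{eq:Green.alpha} and use that the modulus of the integrand is monotone in $\lambda<0$, so it suffices to bound
\[
 |((-\Delta)^\alpha - \lambda)^{-1}_{m,n}| \le \frac2\pi \int_{-1}^1 \frac{|U_{m-1}(x)|\,|U_{n-1}(x)|}{2^\alpha(1-x)^\alpha}\,\sqrt{1-x^2}\dd x.
\]
Now instead of pulling out $\|U_{m-1}\|_\infty\|U_{n-1}\|_\infty$, I would apply the Cauchy--Schwarz inequality to the two factors $U_{m-1}$ and $U_{n-1}$ against the measure $\frac{2}{\pi}\,2^{-\alpha}(1-x)^{-\alpha}\sqrt{1-x^2}\dd x$, which is a positive finite measure on $(-1,1)$ precisely because $\alpha<3/2$. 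This yields
\[
 |((-\Delta)^\alpha - \lambda)^{-1}_{m,n}| \le \sqrt{I_m(\alpha)}\,\sqrt{I_n(\alpha)},\qquad
 I_k(\alpha) := \frac2\pi\int_{-1}^1 \frac{U_{k-1}(x)^2}{2^\alpha(1-x)^\alpha}\,\sqrt{1-x^2}\dd x.
\]

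The heart of the matter is then the evaluation of the integral $I_k(\alpha)$ and the identification
\[
 I_k(\alpha) = \frac{1}{2\pi}\frac{\Gamma^2(\alpha)}{\Gamma(2\alpha)}\, g_k(\alpha).
\]
I expect this to be the main obstacle, and to be exactly the content of one of the auxiliary integral identities promised in the Appendix (analogous to Lemma~\ref{lem:cheb_id1} used for~\eqref{eq:frac_lapl_matrix}). The natural route is to note that $I_k(\alpha) = 2^{-\alpha}(-\Delta)^\alpha_{k,k}$ evaluated via the same Chebyshev integral as in the Proposition — more precisely, $\frac2\pi\int_{-1}^1 (1-x)^{-\alpha}U_{k-1}(x)^2\sqrt{1-x^2}\dd x$ is, up to the factor $2^{\alpha+1}$, the $(k,k)$ entry of $(-\Delta)^{-\alpha}$ in the form sense discussed in Remark~\ref{rem:neg.alp}, so formula~\eqref{eq:frac_lapl_matrix} with $\alpha$ replaced by $-\alpha$ should give
\[
 I_k(\alpha) = \binom{-2\alpha}{-\alpha} - \binom{-2\alpha}{-\alpha+2k}
 = \frac{\Gamma(1-2\alpha)}{\Gamma^2(1-\alpha)} - \frac{\Gamma(1-2\alpha)}{\Gamma(1-\alpha+2k)\,\Gamma(1-\alpha-2k)}
\]
(valid for $\alpha<3/4$ as an absolutely convergent integral, and by analytic continuation for $\alpha\in(0,3/2)$). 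Using the reflection formula $\Gamma(z)\Gamma(1-z)=\pi/\sin(\pi z)$ to convert $\Gamma(1-2\alpha)/\Gamma^2(1-\alpha)$ into $\tfrac{1}{2\pi}\tfrac{\Gamma^2(\alpha)}{\Gamma(2\alpha)}\cdot 2\tan(\pi\alpha)$-type expressions, and $\Gamma(1-\alpha-2k) = (-1)^{?}\Gamma(1-\alpha)/(\alpha)_{2k}\cdot(\dots)$ type shift identities to produce the Pochhammer ratio $(\alpha)_{2k}/(1-\alpha)_{2k}$, one should land exactly on $\frac{1}{2\pi}\frac{\Gamma^2(\alpha)}{\Gamma(2\alpha)}g_k(\alpha)$ with $g_k$ as in~\eqref{eq:def_g_n}. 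The special values $\alpha=1/2,1$ in~\eqref{eq:g_n_1/2_1} then correspond to the removable singularities of $\tan(\pi\alpha)$ times the vanishing Pochhammer ratio, handled by a limiting (L'Hôpital / digamma) argument.

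Finally, I would record that $I_k(\alpha)\ge 0$ automatically since the integrand is nonnegative (which also re-proves $g_k(\alpha)>0$), so the square roots in~\eqref{eq:Green.bound.In} are well defined, and assemble the three pieces — Cauchy--Schwarz, the closed form for $I_k(\alpha)$, and the reflection/Pochhammer bookkeeping — into the claimed bound. The only genuinely delicate point beyond bookkeeping is justifying the use of~\eqref{eq:frac_lapl_matrix} with a negative exponent; if one prefers to avoid invoking Remark~\ref{rem:neg.alp}, one can instead compute $\frac2\pi\int_{-1}^1 (1-x)^{-\alpha}U_{k-1}(x)^2\sqrt{1-x^2}\dd x$ directly by writing $U_{k-1}(x)^2 = \sum_{j} c_j U_{2j}(x)$ via the linearization formula for Chebyshev polynomials of the second kind and integrating term by term against $(1-x)^{-\alpha}\sqrt{1-x^2}$ using the beta-integral $\int_{-1}^1 (1-x)^{-\alpha}U_{2j}(x)\sqrt{1-x^2}\dd x$, which is again an Appendix-type identity; this keeps the argument self-contained within the bounded regime $\alpha\in(0,3/2)$.
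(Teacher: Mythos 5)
Your argument is correct and coincides with the paper's proof: the same monotonicity-in-$\lambda$ reduction and Cauchy--Schwarz step appear verbatim, and the evaluation of $I_n(\alpha)$ you outline (Lemma~\ref{lem:cheb_id1} with $\alpha$ replaced by $-\alpha$, followed by the reflection formula and Pochhammer shifts, with analytic continuation to cover all of $\alpha\in(0,3/2)$) is exactly the content and proof of Lemma~\ref{lem:cheb_id2} in the Appendix.
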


\begin{proof}
Since $\alpha\in(0,3/2)$, the integral in formula~\eqref{eq:Green.alpha} is convergent for $\la = 0$. Therefore, with fixed $m,n \in \N$ and $\la < 0$, we can estimate the Green kernel as follows
\[
	|((-\Delta)^\alpha - \lambda)^{-1}_{m,n}| \le \frac1{2^{\alpha-1}\pi} \int_{-1}^1 \frac{|U_{m-1}(x) U_{n-1}(x)|} {(1-x)^{\alpha}} \sqrt{1-x^2} \dd x \le \frac1{2^{\alpha-1}\pi} \sqrt{I_m (\alpha)} \sqrt{I_n(\alpha)},
\]
where we  used the Cauchy--Schwarz inequality and defined
\begin{equation}\label{eq:def.In}
	I_n (\alpha):= \int_{-1}^1 \frac{U_{n-1}^2 (x)}{(1-x)^{\alpha}} \sqrt{1-x^2} \dd x.
\end{equation}
The rest of the proof follows from a formula for $I_{n}(\alpha)$ in the Appendix, see Lemma~\ref{lem:cheb_id2}, where it is proven that 
\[
I_{n}(\alpha)=2^{\alpha-2}\,\frac{\Gamma^{2}(\alpha)}{\Gamma(2\alpha)}\,g_{n}(\alpha)
\]
for any $\alpha\in(0,3/2)$ and $n\in\N$ (see therein also the limiting formulas for $\alpha = 1/2$ and $\alpha = 1$).
\end{proof}

\section{Proofs of Theorems~\ref{thm:main1}, \ref{thm:main2}, and \ref{thm:main3}}\label{sec:proofs}

Our method relies on the Birman--Schwinger principle, see e.g.~\cite{Hansmann-Krejcirik-2022}, which allows to relate both criticality and subcriticality to the behavior of the Green kernel~\eqref{eq:Green.alpha} as the spectral parameter $\lambda$ approaches the spectrum. The uniform bounds from Lemmas~\ref{lem:unif.bound} and~\ref{lem:unif.bound.refined} guarantee a finite limit of the Green kernel as $\lambda\to0^-$. This leads to a Hardy-type inequality and thus to the subcriticality of $(-\Delta)^{\alpha}$ for $\alpha\in(0,3/2)$. On the other hand, if $\alpha\geq3/2$, the diagonal entries of the Green kernel have a singularity as $\lambda\to0^-$, which results in the criticality of $(-\Delta)^{\alpha}$. 

\subsection{Proof of Theorem~\ref{thm:main1}}\label{sec:proof1}

Recall that we always assume $\alpha>0$. The statement of Theorem~\ref{thm:main1} is proven in two steps:

\begin{enumerate}
\item[(i)] If $\alpha<3/2$, then $(-\Delta)^{\alpha}$ is subcritical.
\item[(ii)] If $\alpha\geq 3/2$, then $(-\Delta)^{\alpha}$ is critical.
\end{enumerate}

\begin{proof}[Step~(i): Proof of the subcriticality in Theorem~\ref{thm:main1}]
Suppose $\alpha \in (0,3/2)$. Consider the potential
\[
 V_{n}:=\frac{\gamma}{n^{4}}, \quad n\in\N,
\]
where $\gamma>0$. Since $V$ is compact (even trace class) the spectrum of $(-\Delta)^{\alpha}-V$ below $0$ can contain only eigenvalues. With this particular choice of $V$, we show that there exists a sufficiently small $\gamma>0$ such that the operator norm of the Birman--Schwinger operator
\begin{equation}\label{eq:K.la.subcrit}
	K(\lambda):= - V^\frac12 ((-\Delta)^\alpha - \lambda)^{-1} V^\frac12
\end{equation}
fulfills 
\begin{equation}
 \sup_{\lambda<0}\|K(\lambda)\|<1.
\label{eq:sup_K_leq_1}
\end{equation}
By the Birman--Schwinger principle, it follows that there exists no negative eigenvalue of $(-\Delta)^{\alpha}-V$. Consequently, $(-\Delta)^{\alpha}\geq V$ and $(-\Delta)^{\alpha}$ is therefore subcritical. 

By means of Lemma~\ref{lem:unif.bound}, we derive the following bound on the Hilbert Schmidt (and thus operator) norm of $K(\lambda)$,
\begin{equation}
	\|K(\lambda)\| \le \|K(\lambda)\|_{\rm HS} = \left(\sum_{m=1}^{\infty} \sum_{n=1}^{\infty} |K_{m,n}(\lambda)|^2\right)^{\!1/2} \le C_\alpha\sum_{n=1}^{\infty}n^{2}V_{n}=C_{\alpha}\sum_{n=1}^{\infty}\frac{\gamma}{n^{2}}
\label{eq:K_norm_estim}
\end{equation}
for all $\lambda<0$. Thus, for any $0<\gamma<6/(\pi^{2}C_{\alpha})$, inequality~\eqref{eq:sup_K_leq_1} holds true.
\end{proof}

For the proof of Step (ii), we will need two auxiliary results.

\begin{lemma}\label{lem:sing}
Let $\alpha \ge 3/2$. Then, for all $n\in\N$, we have
\begin{equation}\label{eq:Green.sing}
	\lim_{\lambda \to 0^-} ((-\Delta)^\alpha - \lambda)_{n,n}^{-1} = + \infty.
\end{equation}
\end{lemma}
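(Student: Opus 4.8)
The starting point is the integral representation of the diagonal Green kernel obtained from~\eqref{eq:Green.alpha} with $m=n$, namely
\[
	((-\Delta)^\alpha - \lambda)_{n,n}^{-1} = \frac2\pi \int_{-1}^1 \frac{U_{n-1}^2(x)}{2^\alpha(1-x)^\alpha - \lambda} \sqrt{1-x^2} \dd x , \qquad \lambda < 0.
\]
For $\lambda<0$ the integrand is strictly positive, and as $\lambda$ increases to $0^-$ the denominator $2^\alpha(1-x)^\alpha-\lambda$ decreases, so the integrand increases pointwise and monotonically on $(-1,1)$. Hence, by the monotone convergence theorem, the limit $\lambda\to0^-$ may be taken inside the integral, yielding
\[
	\lim_{\lambda\to0^-}((-\Delta)^\alpha - \lambda)_{n,n}^{-1} = \frac{1}{2^{\alpha-1}\pi}\int_{-1}^1 \frac{U_{n-1}^2(x)}{(1-x)^{\alpha}} \sqrt{1-x^2} \dd x = \frac{1}{2^{\alpha-1}\pi}\, I_n(\alpha),
\]
with $I_n(\alpha)$ as in~\eqref{eq:def.In}, where the right-hand side is a priori allowed to be $+\infty$.

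It therefore remains to show that $I_n(\alpha)=+\infty$ for every $n\in\N$ when $\alpha\geq3/2$. This is a purely local matter near the endpoint $x=1$. Writing $\sqrt{1-x^2}=\sqrt{1+x}\,\sqrt{1-x}$, the integrand of $I_n(\alpha)$ equals $U_{n-1}^2(x)\sqrt{1+x}\,(1-x)^{1/2-\alpha}$. Since $U_{n-1}$ is a polynomial with $U_{n-1}(1)=n\neq0$, the factor $U_{n-1}^2(x)\sqrt{1+x}$ is continuous and bounded below by a positive constant on a neighbourhood of $x=1$. Consequently, on such a neighbourhood the integrand is bounded below by a constant multiple of $(1-x)^{1/2-\alpha}$, and $\int^1 (1-x)^{1/2-\alpha}\dd x$ diverges precisely when $1/2-\alpha\leq-1$, i.e.\ when $\alpha\geq3/2$. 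This forces $I_n(\alpha)=+\infty$, and combined with the previous paragraph gives~\eqref{eq:Green.sing}.

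I do not expect any genuine obstacle here; the only points requiring a line of care are the justification of interchanging limit and integral (handled by monotone convergence, using the monotonicity in $\lambda$) and the observation that $U_{n-1}(1)=n>0$, which is what prevents the singularity at $x=1$ from being cancelled by a zero of the Chebyshev polynomial and thus guarantees the divergence for \emph{every} $n$. One could alternatively phrase the endpoint analysis by comparison, fixing $\delta>0$ small and bounding $I_n(\alpha)\geq c_n\int_{1-\delta}^1(1-x)^{1/2-\alpha}\dd x=+\infty$ with $c_n:=\tfrac12\min_{x\in[1-\delta,1]}U_{n-1}^2(x)\sqrt{1+x}>0$.
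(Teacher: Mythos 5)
Your proof is correct and follows essentially the same route as the paper: both rest on the integral representation \eqref{eq:Green.alpha}, the monotonicity of the integrand in $\lambda$ together with monotone convergence, and the fact that $U_{n-1}(1)=n>0$ keeps the integrand bounded below near $x=1$, where $(1-x)^{1/2-\alpha}$ fails to be integrable exactly when $\alpha\geq 3/2$. The only (cosmetic) difference is the order of operations: you pass to the limit in the full integral first and then show the limiting integral $I_n(\alpha)$ diverges, whereas the paper first splits off the interval $[a_n,1]$ beyond the largest zero of $U_{n-1}$, bounds the Green kernel from below, and applies monotone convergence only to the resulting elementary integral.
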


\begin{proof}
Taking the formal limit $\la \to 0^-$ under the integral in~\eqref{eq:Green.alpha} produces a singularity in the integrand at $x=1$. The strategy is to split off an interval touching this singularity where the integrand is strictly positive (and one can thus easily show this portion of the integral to diverge). The remaining term can be easily bounded uniformly in $\lambda$. For this partition, the sign of the Chebyshev polynomials $U_n$ is of interest. As we know from the proof of Lemma~\ref{lem:unif.bound},
\begin{equation}\label{eq:Cheb.max}
	\max_{x \in [-1,1]} |U_{n-1} (x)| = U_{n-1}(1) = n, \quad n\in\N.
\end{equation}
Therefore we can pick a point larger than the largest zero of $U_{n-1}$, e.g.
\begin{equation}
	\max \left \{x \in \R \, : \, U_{n-1}(x) = 0 \right\} < a_{n} := \cos \left(\frac{\pi}{n+1} \right) \in (0,1), \quad n \in  \N,
\end{equation}
and we have
\begin{equation}\label{eq:min.Un}
	\min_{x \in [a_n,1]} U_{n-1}(x) > 0 , \quad n \in  \N.
\end{equation}

For arbitrary fixed $n \in \N$ and a spectral parameter $\lambda <0$, we start estimating~\eqref{eq:Green.alpha} with $m=n$ from below. Splitting the interval of integration at $a_n$,  using the triangle inequality and omitting $\la < 0$ under the second integral we arrive at
\begin{equation}\label{eq:Green.lower.bd}
	((-\Delta)^\alpha - \lambda)^{-1}_{n,n}  \ge  \frac2\pi \int_{a_n}^1 \frac{U_{n-1}^2(x)}{2^\alpha (1-x)^\alpha - \lambda}  \sqrt{1-x^2} \dd x - \frac2\pi \int_{-1}^{a_n} \frac{U_{n-1}^2(x)}{2^\alpha (1-x)^\alpha } \sqrt{1-x^2} \dd x.
\end{equation}
Since the second term is now independent of $\lambda$ and the integral converges, to prove~\eqref{eq:Green.sing} it suffices to show that the first integral in~\eqref{eq:Green.lower.bd} tends to infinity as $\lambda \to 0^-$. Using that $\alpha \ge 3/2$ and $a_n \ge 0$, we have
\begin{equation}
	(1-x)^{\alpha} \le (1-x)^{\frac32}
\end{equation}
for all  $x \in [a_n, 1] \subset [0,1]$.
We therefrom conclude a lower estimate on the first integral in~\eqref{eq:Green.lower.bd} as follows
\begin{equation}\label{eq:Green.lower.bd2}
	\frac2\pi \int_{a_n}^1 \frac{U_{n-1}^2(x)}{2^\alpha (1-x)^\alpha - \lambda}  \sqrt{1-x^2} \dd x \ge \sqrt{1+a_n}\min_{x \in [a_n,1]}\left(U_{n-1}^2 (x)\right) \int_{a_n}^1 \frac{\sqrt{1-x}}{2^\alpha (1-x)^{3/2} - \lambda} \dd x.
\end{equation}
Hence it suffices to check that the integral on the right-hand side tends to infinity as $\lambda\to0^{-}$. This is the case indeed as, by the monotone convergence, one has
\[
\lim_{\lambda\to0^{-}}\int_{a_n}^1 \frac{\sqrt{1-x}}{2^\alpha (1-x)^{3/2} - \lambda} \dd x=\frac{1}{2^\alpha}\int_{a_n}^1 \frac{\dd x}{1-x}=\infty.
\]
\end{proof}

We next use the above lemma to show that, when $\alpha\geq3/2$, for any arbitrarily small localized perturbation of $(-\Delta)^{\alpha}$ a unique eigenvalue emerges from the bottom of the spectrum.
We denote by
\[
 \delta_{n}:=\langle e_{n}, \,\cdot\,\rangle e_{n}
\]
the delta potential localized at $n\in\N$.

\begin{lemma}\label{lem:ev}
Let $\alpha \ge 3/2$. Then for any $n \in \N$ and $c>0$ the operator $(-\Delta)^{\alpha}-c \delta_n$ has a unique negative eigenvalue.
\end{lemma}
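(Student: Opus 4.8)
The plan is to apply the Birman--Schwinger principle in the direction opposite to Step (i): instead of showing the Birman--Schwinger operator has norm below $1$, we show it reaches $1$ at some $\lambda < 0$. Fix $n \in \N$ and $c > 0$, and set $V := c\,\delta_n$, a rank-one non-negative potential. Since $V$ is trace class, the spectrum of $(-\Delta)^\alpha - c\delta_n$ below $0$ consists only of eigenvalues. The associated Birman--Schwinger operator $K(\lambda) = -V^{1/2}((-\Delta)^\alpha - \lambda)^{-1}V^{1/2}$ is rank one; concretely, because $V^{1/2} = \sqrt{c}\,\delta_n$, the operator $K(\lambda)$ acts as multiplication by the single number
\[
 k(\lambda) := -c\,((-\Delta)^\alpha - \lambda)^{-1}_{n,n}
\]
on the span of $e_n$ and is zero on its orthogonal complement. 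Hence $\|K(\lambda)\| = |k(\lambda)| = c\,|((-\Delta)^\alpha - \lambda)^{-1}_{n,n}|$. By the Birman--Schwinger principle, $\mu < 0$ is an eigenvalue of $(-\Delta)^\alpha - c\delta_n$ if and only if $1$ is an eigenvalue of $K(\mu)$, i.e.\ if and only if $k(\mu) = 1$, and the (geometric) multiplicities agree; since $K(\mu)$ has rank one, any such eigenvalue of $(-\Delta)^\alpha - c\delta_n$ is simple.

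It remains to show the scalar equation $k(\lambda) = 1$ has exactly one solution $\lambda \in (-\infty,0)$. First, $((-\Delta)^\alpha - \lambda)^{-1}_{n,n} = \langle e_n, ((-\Delta)^\alpha - \lambda)^{-1} e_n\rangle > 0$ for every $\lambda < 0$, since $(-\Delta)^\alpha - \lambda$ is a positive operator (its spectrum lies in $[-\lambda, 4^\alpha - \lambda] \subset (0,\infty)$), so $k(\lambda) > 0$ throughout. Next, $\lambda \mapsto ((-\Delta)^\alpha - \lambda)^{-1}_{n,n}$ is strictly increasing on $(-\infty,0)$: this follows e.g.\ from the integral representation~\eqref{eq:Green.alpha}, whose integrand $\frac{U_{n-1}^2(x)}{2^\alpha(1-x)^\alpha - \lambda}\sqrt{1-x^2}$ is (for each $x$) strictly increasing in $\lambda < 0$, or alternatively from the resolvent identity and positivity of $((-\Delta)^\alpha - \lambda)^{-2}$. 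Hence $k$ is strictly increasing and continuous on $(-\infty,0)$. As $\lambda \to -\infty$ we have $\|((-\Delta)^\alpha - \lambda)^{-1}\| \le |\lambda|^{-1} \to 0$, so $k(\lambda) \to 0^+$; and as $\lambda \to 0^-$, Lemma~\ref{lem:sing} (which uses $\alpha \ge 3/2$) gives $((-\Delta)^\alpha - \lambda)^{-1}_{n,n} \to +\infty$, so $k(\lambda) \to +\infty$. By the intermediate value theorem and strict monotonicity, there is exactly one $\mu \in (-\infty,0)$ with $k(\mu) = 1$, and this $\mu$ is the unique negative eigenvalue of $(-\Delta)^\alpha - c\delta_n$.

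The only genuinely nontrivial input here is the divergence $((-\Delta)^\alpha - \lambda)^{-1}_{n,n} \to +\infty$ as $\lambda \to 0^-$, which is precisely the content of Lemma~\ref{lem:sing} and is where the hypothesis $\alpha \ge 3/2$ enters; everything else (rank-one reduction of the Birman--Schwinger operator, positivity, monotonicity, the behavior at $-\infty$) is routine. I would expect no serious obstacle beyond making sure the equivalence "$\mu$ is an eigenvalue $\iff k(\mu)=1$" together with the matching of multiplicities is cited in the form stated in~\cite{Hansmann-Krejcirik-2022}, so that simplicity of the eigenvalue is immediate from the rank-one structure.
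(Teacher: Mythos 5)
Your proof is correct and follows essentially the same route as the paper: reduce to the rank-one Birman--Schwinger operator, identify its single nonzero eigenvalue with a scalar function of $\lambda$, and use monotonicity, continuity, the decay as $\lambda\to-\infty$, and the divergence from Lemma~\ref{lem:sing} to get exactly one solution. The only blemish is a sign slip: having defined $k(\lambda):=-c\,((-\Delta)^\alpha-\lambda)^{-1}_{n,n}$ (which is negative for $\lambda<0$), the correct Birman--Schwinger condition is $k(\mu)=-1$, not $k(\mu)=1$, and correspondingly $k$ is negative and decreasing to $-\infty$; with that sign fixed the argument matches the paper's.
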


\begin{proof}
Fix $c > 0$ and $n \in \N$. Since $\delta_n$ is a projection, one has $\delta_{n}=\delta_{n}^{2}$ and the corresponding Birman--Schwinger operator reads
\begin{equation}\label{eq:K.la}
	K(\lambda) := - c \delta_n ((-\Delta)^\alpha - \lambda)^{-1} \delta_n, \quad \la \in \C \setminus [0,4^\alpha].
\end{equation}
For all $\la \in \C \setminus [0,4^\alpha]$, the Birman--Schwinger principle gives the equivalence
\begin{equation}\label{eq:BS}
	\lambda \in \sigma_{\rm p} ((-\Delta)^\alpha - c\delta_n) \quad \iff \quad -1 \in \sigma_{\rm p} \left(K(\lambda)\right).
\end{equation}
Since $K(\lambda)$ is a rank one operator with the single non-zero eigenvalue
\begin{equation}\label{eq:def.mu}
	\mu (\lambda) := - c  ((-\Delta)^\alpha - \lambda)^{-1}_{n,n},
\end{equation}
equivalence~\eqref{eq:BS} can be rewritten as 
\begin{equation}\label{eq:mu.BS}
	\lambda \in \sigma_{\rm p} ((-\Delta)^\alpha - c\delta_n) \quad \iff \quad \mu(\lambda)=-1.
\end{equation}
The strategy is to show that there exists a unique $\la=\lambda_{n}(c) <0$ such that $\mu(\lambda)=-1$. For this it is sufficient to verify that $\mu$, as a function on $(-\infty, 0)$, has the following properties:
\begin{enumerate}[label=(\alph*)]
\item $\mu$ is strictly decreasing on $(-\infty,0)$,
\item $\mu$ is continuous on $(-\infty,0)$,
\item $\mu(\lambda)\to0$ as $\lambda\to-\infty$,
\item $\mu(\lambda)\to-\infty$ as $\lambda\to0^{-}$.
\end{enumerate}

Property~(a) is immediate from the integral representation
\begin{equation}
\mu(\lambda)=-\frac{2c}{\pi}\int_{-1}^1 \frac{U_{n-1}^{2}(x)}{2^\alpha(1-x)^\alpha - \lambda}  \sqrt{1-x^2} \dd x,
\label{eq:mu_int_repre}
\end{equation}
see~\eqref{eq:Green.alpha}. Since the resolvent $((-\Delta)^{\alpha}-\lambda)^{-1}$ is an analytic (operator-valued) function of $\lambda$ on $\C \setminus [0,4^\alpha]$, (b) follows. 
By monotone convergence applied to the integral in~\eqref{eq:mu_int_repre}, one easily verifies property~(c). 
Finally, (d) is a consequence of Lemma~\ref{lem:sing}.
\end{proof}

We are now ready to prove the remaining part of Theorem~\ref{thm:main1}.

\begin{proof}[Step~(ii): Proof of the criticality in Theorem~\ref{thm:main1}]
Let $\alpha \ge 3/2$. From Lemma~\ref{lem:ev} it follows that if 
\[
(-\Delta)^\alpha \ge c \delta_n
\]
with any $n\in\N$ and $c\geq0$, then necessarily $c=0$. Suppose that $V\geq0$ is a given bounded potential such that $(-\Delta)^\alpha \ge V$. Then, for all $n\in\N$, one has $V\geq V_{n}\delta_{n}$ and thus $(-\Delta)^\alpha \ge V_{n}\delta_{n}$. By the observation above, it follows that $V_{n}=0$. Since $n\in\N$ is arbitrary, we conclude $V=0$.
\end{proof}

\subsection{A comment on the subcriticality of $(-\Delta)^{\alpha}$ from above}\label{subsec:crit_from_above}
Recall that $\sigma((-\Delta)^{\alpha})=[0,4^{\alpha}]$ for any $\alpha>0$. Analogously to the usual notion of criticality of $(-\Delta)^{\alpha}$, one may ask about the stability of the upper bound $4^{\alpha}$ when $(-\Delta)^{\alpha}$ is perturbed by a bounded potential $V\geq0$. In other words, we may investigate the criticality or subcriticality of the operator $4^{\alpha}-(-\Delta)^{\alpha}$. It turns out that this operator is subcritical for all $\alpha>0$, which can be seen from essentially the same arguments as in the proof of Theorem~\ref{thm:main1}.

\begin{proposition}
The operator $4^{\alpha}-(-\Delta)^{\alpha}$ is subcritical for all $\alpha>0$.
\end{proposition}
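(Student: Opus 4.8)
The plan is to mimic the proof of the subcriticality part (Step~(i)) of Theorem~\ref{thm:main1}, but near the \emph{top} $\lambda = 4^\alpha$ of the spectrum rather than near the bottom. By the Birman--Schwinger principle, $4^\alpha - (-\Delta)^\alpha \geq V$ for a suitable non-trivial $V \geq 0$ is equivalent to the absence of eigenvalues of $(-\Delta)^\alpha + V$ above $4^\alpha$, which in turn follows from $\sup_{\lambda > 4^\alpha} \|\widetilde K(\lambda)\| < 1$ where $\widetilde K(\lambda) := V^{1/2}((-\Delta)^\alpha - \lambda)^{-1} V^{1/2}$. As before I would take $V_n := \gamma/n^4$ with $\gamma > 0$ small, so that $V$ is trace class and the spectrum of $(-\Delta)^\alpha + V$ above $4^\alpha$ consists only of eigenvalues.

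The key step is a uniform-in-$\lambda$ bound on the Green kernel for $\lambda > 4^\alpha$, analogous to Lemma~\ref{lem:unif.bound} but now requiring \emph{no} restriction $\alpha < 3/2$. Starting from the integral representation~\eqref{eq:Green.alpha}, for $\lambda > 4^\alpha$ the denominator $2^\alpha(1-x)^\alpha - \lambda$ is negative and bounded away from $0$ on $[-1,1]$: indeed $|2^\alpha(1-x)^\alpha - \lambda| \geq \lambda - 4^\alpha$ on the whole interval, and more usefully the integrand's modulus is a decreasing function of $\lambda > 4^\alpha$, so one may estimate by setting $\lambda = 4^\alpha$ (or any fixed $\lambda_0 > 4^\alpha$; the choice $\lambda=4^\alpha$ is harmless here because the integrable singularity of the denominator is now at $x = -1$, where $\sqrt{1-x^2}$ vanishes, so the integral stays finite for every $\alpha>0$). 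This gives
\[
 |((-\Delta)^\alpha - \lambda)^{-1}_{m,n}| \le \frac{2}{\pi} \int_{-1}^1 \frac{|U_{m-1}(x) U_{n-1}(x)|}{4^\alpha - 2^\alpha(1-x)^\alpha} \sqrt{1-x^2} \dd x \le \widetilde C_\alpha\, m n,
\]
using $\|U_{n-1}\|_\infty = n$ exactly as in Lemma~\ref{lem:unif.bound}, with $\widetilde C_\alpha := \frac{2}{\pi}\int_{-1}^1 \frac{\sqrt{1-x^2}}{4^\alpha - 2^\alpha(1-x)^\alpha}\dd x < \infty$ for all $\alpha > 0$. The finiteness of $\widetilde C_\alpha$ is the one point to check carefully: near $x = -1$ the denominator behaves like $4^\alpha - 4^\alpha = 0$... wait, $(1-x)^\alpha \to 2^\alpha$ so $2^\alpha(1-x)^\alpha \to 4^\alpha$; thus the denominator vanishes to order $|1+x|$ while $\sqrt{1-x^2} \sim \sqrt{2}\sqrt{1+x}$, so the integrand is $O(|1+x|^{-1/2})$, which is integrable. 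Hence $\widetilde C_\alpha < \infty$ unconditionally in $\alpha$.

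With this bound in hand, the Hilbert--Schmidt estimate goes through verbatim: $\|\widetilde K(\lambda)\| \le \|\widetilde K(\lambda)\|_{\rm HS} \le \widetilde C_\alpha \sum_{n=1}^\infty n^2 V_n = \widetilde C_\alpha \gamma \sum_{n=1}^\infty n^{-2} = \widetilde C_\alpha \gamma \pi^2/6$ for all $\lambda > 4^\alpha$, so choosing $0 < \gamma < 6/(\pi^2 \widetilde C_\alpha)$ yields $\sup_{\lambda > 4^\alpha}\|\widetilde K(\lambda)\| < 1$ and hence $4^\alpha - (-\Delta)^\alpha \geq V$, proving subcriticality. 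The only genuine obstacle is confirming the integrability of $\widetilde C_\alpha$ at the endpoint $x=-1$ for all $\alpha>0$; once that is observed, everything else is a transcription of Step~(i). I would also remark that the monotonicity argument shows one could equally bound uniformly over $\lambda \in \R \setminus [0,4^\alpha]$ by combining with Lemma~\ref{lem:unif.bound} when $\alpha < 3/2$, but for the present proposition only the region $\lambda > 4^\alpha$ matters.
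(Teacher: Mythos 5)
Your proposal is correct and follows essentially the same route as the paper: a uniform bound $|((-\Delta)^\alpha-\lambda)^{-1}_{m,n}|\le \widetilde C_\alpha\, mn$ for $\lambda$ beyond the top of the spectrum, with $\widetilde C_\alpha$ finite for every $\alpha>0$ because the singularity of the denominator now sits at $x=-1$, followed by the same Hilbert--Schmidt estimate with $V_n=\gamma/n^4$. Your endpoint analysis is the right one — the integrand is $O\bigl((1+x)^{-1/2}\bigr)$ as $x\to-1^+$, hence integrable — which is exactly the point the paper singles out as the only difference from Step~(i).
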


\begin{proof}
 Fix any $\alpha >0$. First, from~\eqref{eq:Green.alpha} we deduce the integral representation of the Green function
 \[
  \left(4^{\alpha}-(-\Delta)^{\alpha}-\lambda\right)_{m,n}^{-1}=\frac2\pi \int_{-1}^1 \frac{U_{m-1}(x) U_{n-1} (x)}{4^{\alpha}-2^\alpha(1-x)^\alpha - \lambda}  \sqrt{1-x^2} \dd x
 \]
 for any $\lambda\notin[0,4^{\alpha}]$ and $m,n\in\N$. Next, similarly as in Lemma~\ref{lem:unif.bound}, we deduce the uniform bound
 \[
 |\left(4^{\alpha}-(-\Delta)^{\alpha}-\lambda\right)_{m,n}^{-1}|\leq C_{\alpha}mn
 \]
 for all $\la<0$, with the positive constant 
 \[ 
 C_\alpha := \frac2 \pi \int_{-1}^1 \frac{\sqrt{1-x^2}}{4^\alpha - 2^\alpha (1-x)^\alpha} \dd x.
 \]
 The only difference to the proof of the subcriticality of $(-\Delta)^{\alpha}$ is that the above integral is always finite regardless the value of $\alpha>0$. Indeed, its integrand is a continuous function on $(-1,1]$ which equals $O(\sqrt{x+1})$ as $x\to-1^{+}$. The rest of the proof is analogous to step (i) in the proof of Theorem~\ref{thm:main1}.
\end{proof}

\subsection{Proof of Theorem~\ref{thm:main2}}\label{sec:proof2}

As a byproduct of our method, in the proof of the subcriticality in Theorem~\ref{thm:main1} we already obtain a (rather rough) Hardy inequality for $(-\Delta)^{\alpha}$. This is done using the bound from Lemma~\ref{lem:unif.bound} when estimating the norm of the Birman--Schwinger operator, see~\eqref{eq:K_norm_estim}. We proceed similarly with the refined bound in Lemma~\ref{lem:unif.bound.refined} to prove Theorem~\ref{thm:main2}.

\begin{proof}[Proof of Theorem~\ref{thm:main2}]
 Let $\alpha\in(0,3/2)$. Suppose first that a potential $V\geq0$ satisfies condition~\eqref{eq:V_suff_cond} with the strict inequality, i.e.~that
 \begin{equation}
 \sum_{n=1}^{\infty}g_{n}(\alpha)V_{n}< 2\pi\frac{\Gamma(2\alpha)}{\Gamma^{2}(\alpha)}.
\label{eq:V_suff_cond_strict}
\end{equation}
Using Lemma~\ref{lem:unif.bound.refined}, we estimate the norm of the corresponding Birman--Schwinger operator, cf.~\eqref{eq:K.la.subcrit} and~\eqref{eq:K_norm_estim}, as follows
\[
 \|K(\lambda)\|\leq\left(\sum_{m=1}^{\infty} \sum_{n=1}^{\infty} |K_{m,n}(\lambda)|^2\right)^{\!1/2}\! =\left(\sum_{m=1}^{\infty} \sum_{n=1}^{\infty} V_{m}\left|\left((-\Delta)^{\alpha}-\lambda\right)_{m,n}^{-1}\right|^{2}V_{n}\right)^{\!1/2} \! \le \frac{1}{2\pi}\frac{\Gamma^{2}(\alpha)}{\Gamma(2\alpha)}\sum_{n=1}^{\infty}V_{n}g_{n}(\alpha)
\]
for all $\lambda<0$. Hence, assumption~\eqref{eq:V_suff_cond_strict} implies $\|K(\lambda)\|<1$ for all $\lambda<0$, and thus further $(-\Delta)^{\alpha}\geq V$ by the Birman--Schwinger principle.

Suppose now that $V\geq0$ satisfies~\eqref{eq:V_suff_cond}. We introduce the auxiliary potentials $V(q):=q V$ for all $q\in(0,1)$. Then~\eqref{eq:V_suff_cond_strict} holds for $V(q)$ and therefore 
\[
(-\Delta)^{\alpha}\geq V(q) 
\]
for all $q\in(0,1)$ by the first part of this proof. Since $V(q)\to V$ converges strongly as $q\to1^{-}$, the above inequality in sense of forms remains valid in the limit and we conclude that $(-\Delta)^{\alpha}\geq V$.
\end{proof}

\subsection{Proof of Theorem~\ref{thm:main3}}\label{sec:proof3}

The idea of the proof is to estimate the function $g_{n}(\alpha)$, given by~\eqref{eq:def_g_n} and~\eqref{eq:g_n_1/2_1}, from above and then apply Theorem~\ref{thm:main2}. Investigating its asymptotic behavior as $n\to\infty$, one finds that 
\[
g_{n}(\alpha)=\begin{cases} 
				O(1)& \mbox{ if } \alpha\in(0,1/2),\\
				O(\ln n) & \mbox{ if } \alpha=1/2,\\
				O(n^{2\alpha-1}) & \mbox{ if } \alpha\in(1/2,3/2).\\
			  \end{cases}
\]
Therefore, the asymptotic behavior of an admissible Hardy weight is expected to depend on $\alpha$, according to the convergence of the sum in~\eqref{eq:V_suff_cond}. In the upcoming proof of Theorem~\ref{thm:main3}, the three cases depending on the value of $\alpha$ are discussed separately. We thereby use the following auxiliary inequalities.

\begin{lemma}\label{lem:g_n_bounds}
For all $n\in\N$, we have
\[
g_{n}(\alpha)\leq\begin{cases} 
				\tan(\alpha\pi)& \mbox{ if } \alpha\in(0,1/2),\\
				\frac{3+\ln n}{\pi} & \mbox{ if } \alpha=1/2,\\
				D_{\alpha}n^{2\alpha-1} & \mbox{ if } \alpha\in(1/2,3/2),\\
			  \end{cases}
\]
with the positive constant
\[
 D_{\alpha}:=\left(\chi_{(1,3/2)}(\alpha)+\frac{\alpha(1+\alpha)2^{2\alpha-1}}{(\alpha-1)(2-\alpha)^{2\alpha}}\right)\tan(\alpha\pi),
\]
where $\chi$ denotes the characteristic function. (For $\alpha=1$, the limiting value $D_{1}=4\pi$  is to be taken.)
\end{lemma}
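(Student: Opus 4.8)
The plan is to bound $g_n(\alpha)$ in each of the three regimes separately, exploiting the explicit form \eqref{eq:def_g_n} together with the sign of the factor $1-(\alpha)_{2n}/(1-\alpha)_{2n}$ and of $\tan(\pi\alpha)$. The crucial preliminary observation is to determine the sign of the ratio $(\alpha)_{2n}/(1-\alpha)_{2n}$: since $(\alpha)_{2n}=\prod_{j=0}^{2n-1}(\alpha+j)>0$ for $\alpha>0$, while $(1-\alpha)_{2n}=\prod_{j=0}^{2n-1}(1-\alpha+j)$ has its sign governed by how many of the factors $1-\alpha, 2-\alpha,\dots,2n-\alpha$ are negative. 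For $\alpha\in(0,1)$ all these factors are positive, so the ratio is positive; for $\alpha\in(1,2)$ exactly one factor ($1-\alpha$) is negative, so the ratio is negative; and for $\alpha\in(2,3)$ two factors are negative, so the ratio is again positive. Since $g_n(\alpha)>0$ is already asserted in the excerpt (and $\tan(\pi\alpha)>0$ on $(0,1/2)\cup(1,3/2)$ while $\tan(\pi\alpha)<0$ on $(1/2,1)$), the signs are consistent and guide each estimate.

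\textbf{Case $\alpha\in(0,1/2)$.} Here $(\alpha)_{2n}/(1-\alpha)_{2n}>0$, hence $1-(\alpha)_{2n}/(1-\alpha)_{2n}<1$, and since $\tan(\pi\alpha)>0$ the bound $g_n(\alpha)\le\tan(\pi\alpha)$ follows immediately from \eqref{eq:def_g_n}. (One should also note $(\alpha)_{2n}/(1-\alpha)_{2n}>0$ keeps the bracket bounded below, but only the upper bound is needed.)

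\textbf{Case $\alpha=1/2$.} Using the closed form \eqref{eq:g_n_1/2_1}, namely $g_n(1/2)=\frac2\pi\sum_{j=1}^{2n}\frac1{2j-1}$, the plan is to estimate the partial sum of reciprocals of odd numbers by an integral comparison: $\sum_{j=1}^{2n}\frac1{2j-1}=1+\sum_{j=2}^{2n}\frac1{2j-1}\le 1+\int_1^{2n}\frac{dt}{2t-1}=1+\tfrac12\ln(4n-1)\le 1+\tfrac12\ln(4n)=1+\ln 2+\tfrac12\ln n$. Hence $g_n(1/2)\le\frac2\pi(1+\ln2+\tfrac12\ln n)\le\frac{3+\ln n}{\pi}$ after a crude numerical check that $2(1+\ln 2)\le 3$, which indeed holds since $2(1+\ln 2)\approx 3.386$—so one needs the slightly sharper comparison $\sum_{j=1}^{2n}\frac1{2j-1}\le 1+\tfrac12\ln(2n-1)+\tfrac12(\text{correction})$ or simply absorb more carefully; the natural fix is to start peeling at $j=1$ with value $1$ and bound the tail $\sum_{j=2}^{2n}\frac1{2j-1}\le\int_{3/2}^{2n}\frac{dt}{2t-1}\cdot$(shift) — in any case a short explicit estimate yields the constant $3$.

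\textbf{Case $\alpha\in(1/2,3/2)$.} This will be the main obstacle. For $\alpha\in(1/2,1)$ one has $\tan(\pi\alpha)<0$ and $(\alpha)_{2n}/(1-\alpha)_{2n}>0$ with (as one checks) $(\alpha)_{2n}/(1-\alpha)_{2n}>1$, so $1-(\alpha)_{2n}/(1-\alpha)_{2n}<0$ and the product is positive; the task is to bound $\bigl((\alpha)_{2n}/(1-\alpha)_{2n}-1\bigr)|\tan(\pi\alpha)|$ by $D_\alpha n^{2\alpha-1}$. For $\alpha\in(1,3/2)$ the ratio is negative, so $1-(\alpha)_{2n}/(1-\alpha)_{2n}>1$ and $\tan(\pi\alpha)>0$; again one must control the growth of $|(\alpha)_{2n}/(1-\alpha)_{2n}|$. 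In both subcases the heart of the matter is the asymptotics $(\alpha)_{2n}/(1-\alpha)_{2n}=\Gamma(2n+\alpha)\Gamma(1-\alpha)/\bigl(\Gamma(\alpha)\Gamma(2n+1-\alpha)\bigr)\sim \tfrac{\Gamma(1-\alpha)}{\Gamma(\alpha)}(2n)^{2\alpha-1}$, but for a clean \emph{uniform} bound valid for all $n\in\N$ (not just asymptotically) I would use the ratio $\Gamma(2n+\alpha)/\Gamma(2n+1-\alpha)\le (2n)^{2\alpha-1}$ — this monotonicity/log-convexity estimate for the Gamma function, together with bookkeeping of the sign of $\Gamma(1-\alpha)$ (which is negative for $\alpha\in(1,2)$, consistent with the ratio being negative there), produces the factor $(2n)^{2\alpha-1}=2^{2\alpha-1}n^{2\alpha-1}$, and absorbing the $\alpha$-dependent prefactor $|\Gamma(1-\alpha)|/\Gamma(\alpha)=|\Gamma(1-\alpha)\Gamma(\alpha)|/\Gamma^2(\alpha)=\pi/(\sin(\pi\alpha)\Gamma^2(\alpha))$ times $|\tan(\pi\alpha)|$, plus the additive $1$ (whence the $\chi_{(1,3/2)}$ term, since for $\alpha\le 1$ the ``$-1$'' does not add to the positive main term), should yield exactly the stated $D_\alpha$. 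The limiting value $D_1=4\pi$ follows from $g_n(1)=2\pi n$ in \eqref{eq:g_n_1/2_1} and the continuity of $D_\alpha$ as $\alpha\to 1$. I expect the delicate part to be pinning down the precise constant $D_\alpha$ — in particular verifying that the crude Gamma-ratio bound, rather than the sharp asymptotic constant, is what gives the displayed closed form with the factor $\alpha(1+\alpha)2^{2\alpha-1}/\bigl((\alpha-1)(2-\alpha)^{2\alpha}\bigr)$, which suggests the authors use a slightly different (telescoping or induction-on-$n$) estimate of $(\alpha)_{2n}/(1-\alpha)_{2n}$ rather than a Gamma-function inequality; matching that exact algebraic form will require care with the first few factors where $1-\alpha+j$ changes sign.
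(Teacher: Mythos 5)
Your case $\alpha\in(0,1/2)$ matches the paper's argument. The other two cases contain genuine gaps.

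For $\alpha=1/2$, your own integral comparison yields $\sum_{j=1}^{2n}\frac{1}{2j-1}\le 1+\ln 2+\tfrac12\ln n$, and since $2(1+\ln 2)\approx 3.386>3$ this does not give the stated constant; you notice this but leave the fix as a gesture. The repair is easy (peel off the first \emph{two} terms: $1+\tfrac13+\int_2^{2n}\frac{\dd t}{2t-1}\le\tfrac43+\tfrac12\ln\tfrac{4n}{3}\le\tfrac32+\tfrac12\ln n$), but as written the step is incomplete.

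The main case $\alpha\in(1/2,3/2)$ is where the proposal does not go through. Your key estimate $\Gamma(2n+\alpha)/\Gamma(2n+1-\alpha)\le(2n)^{2\alpha-1}$ is false: for $n=1$, $\alpha=3/4$ the left side is $\Gamma(2.75)/\Gamma(2.25)\approx 1.4196$ while $2^{1/2}\approx 1.4142$. (Wendel/Gautschi give the exponent base $2n+1-\alpha$, not $2n$, and for $\alpha\in(1,3/2)$ the increment $2\alpha-1$ exceeds $1$, so those inequalities do not apply directly.) Moreover, even if a corrected Gamma-ratio bound were supplied, it would produce a prefactor $|\Gamma(1-\alpha)|\,2^{2\alpha-1}/\Gamma(\alpha)$ rather than the stated $\alpha(1+\alpha)2^{2\alpha-1}/(|1-\alpha|(2-\alpha)^{2\alpha})$; you acknowledge that matching $D_\alpha$ ``will require care'' and suspect a different route, which means the central computation of the lemma is deferred rather than carried out. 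The paper's actual argument is elementary and avoids the Gamma function entirely: it factors out the first two Pochhammer factors, writing
\[
\frac{(\alpha)_{2n}}{(1-\alpha)_{2n}}=\frac{\alpha(1+\alpha)}{(1-\alpha)(2-\alpha)}\,\frac{(2+\alpha)_{2n-2}}{(3-\alpha)_{2n-2}},
\]
which isolates the single sign change at $1-\alpha$ and leaves a manifestly positive ratio; then it bounds
\[
\ln\frac{(2+\alpha)_{2n-2}}{(3-\alpha)_{2n-2}}=\sum_{j=1}^{2n-2}\ln\Bigl(1+\frac{2\alpha-1}{j+2-\alpha}\Bigr)\le\int_0^{2n-2}\frac{2\alpha-1}{j+2-\alpha}\,\dd j=(2\alpha-1)\ln\frac{2n-\alpha}{2-\alpha},
\]
and exponentiating gives exactly the constant $D_\alpha$. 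If you want to salvage your route, you must either prove a correct uniform Gamma-ratio bound valid for all $n\ge1$ and $\alpha\in(1/2,3/2)$ and verify that the resulting constant is at most $D_\alpha$, or switch to the product/integral comparison above.
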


\begin{proof}
We distinguish several cases depending on the value of $\alpha$.

\emph{Case $\alpha\in(0,1/2)$:} It suffices to notice that $(\alpha)_{k}/(1-\alpha)_{k}>0$ for all  $\alpha\in(0,1/2)$ and $k\in\N$ (this cannot be improved since the fraction is decreasing in $k$ with zero limit as $k\to\infty$). Hence, it follows readily from definition~\eqref{eq:def_g_n} that 
\[
 g_{n}(\alpha)\leq\tan(\alpha\pi).
\]

\emph{Case $\alpha=1/2$:} It is not difficult to verify that
\[
 \sum_{j=1}^{2n}\frac{1}{2j-1}\leq \frac{3}{2}+\frac{1}{2}\ln n
\]
for all $n\in\N$. The claimed estimate for $g_{n}(1/2)$ then follows from~\eqref{eq:g_n_1/2_1}.

\emph{Case $\alpha=1$:} Immediate from~\eqref{eq:g_n_1/2_1}.

\emph{Case $\alpha\in(1/2,3/2)\setminus\{1\}$:} Since in this case $2 \alpha -1 >0$ and $\tan (\alpha \pi) <0$ when $\alpha \in (1/2, 1)$, from~\eqref{eq:def_g_n} we obtain the upper bound
\begin{equation}
g_{n}(\alpha)\leq\left(\chi_{(1,3/2)}(\alpha)n^{2\alpha-1}+\left|\frac{(\alpha)_{2n}}{(1-\alpha)_{2n}}\right|\right)|\tan(\alpha\pi)|.
\label{eq:g_n_first_ineq_inproof}
\end{equation}
To arrive at the claim, we need to estimate the absolute value of the ratio of Pochhammer symbols above. Observing that 
\[
\frac{(\alpha)_{2n}}{(1-\alpha)_{2n}}=\frac{\alpha(1+\alpha)}{(1-\alpha)(2-\alpha)}\frac{(2+\alpha)_{2n-2}}{(3-\alpha)_{2n-2}}
\]
and that the second fraction on the right-hand side is always positive for $\alpha\in(1/2,3/2)$ and $n\in\N$, we proceed by
\begin{align}
\ln\frac{(2+\alpha)_{2n-2}}{(3-\alpha)_{2n-2}}&=\ln\prod_{j=1}^{2n-2}\left(1+\frac{2\alpha-1}{j+2-\alpha}\right)=\sum_{j=1}^{2n-2}\ln\left(1+\frac{2\alpha-1}{j+2-\alpha}\right)\leq\sum_{j=1}^{2n-2}\frac{2\alpha-1}{j+2-\alpha}\nonumber\\
&\leq\int_{0}^{2n-2}\frac{2\alpha-1}{j+2-\alpha}\dd j=(2\alpha-1)\ln\frac{2n-\alpha}{2-\alpha}.
\label{eq:estim_ratio_poch_inproof}
\end{align}
Therefore 
\[
 \left|\frac{(\alpha)_{2n}}{(1-\alpha)_{2n}}\right|\leq\frac{\alpha(1+\alpha)}{|1-\alpha|(2-\alpha)}\left(\frac{2n-\alpha}{2-\alpha}\right)^{2\alpha-1}\leq\frac{\alpha(1+\alpha)2^{2\alpha-1}}{|1-\alpha|(2-\alpha)^{2\alpha}}\,n^{2\alpha-1}
\]
for all $\alpha\in(1/2,3/2)\setminus\{1\}$ and $n\in\N$. When this is applied in~\eqref{eq:g_n_first_ineq_inproof}, noticing that $1-\alpha$ and $\tan(\alpha \pi)$ have the same sign, we obtain the claim. 
\end{proof}

\begin{proof}[Proof of Theorem~\ref{thm:main3}]
\emph{Case $\alpha\in(0,1/2)$:} Fix $\varepsilon>0$ and consider $V$ given by~\eqref{eq:power-form_Hardy_weight}, i.e.
\[
 V_{n}=\frac{\gamma}{n^{1+\varepsilon}}, \quad n\in\N,
\]
where the constant $\gamma>0$ is to be specified so that the condition~\eqref{eq:V_suff_cond} of Theorem~\ref{thm:main2} holds. Using Lemma~\ref{lem:g_n_bounds}, we find 
\[
 \sum_{n=1}^{\infty}g_{n}(\alpha)V_{n}\leq\gamma\tan(\alpha\pi)\sum_{n=1}^{\infty}\frac{1}{n^{1+\varepsilon}}=\gamma\tan(\alpha\pi)\zeta(1+\varepsilon),
\]
where $\zeta$ is the Riemann zeta function. Thus, taking
\[
 \gamma=\gamma(\alpha,\varepsilon):=\frac{2\pi\Gamma(2\alpha)}{\Gamma^{2}(\alpha)\tan(\alpha\pi)\zeta(1+\varepsilon)},
\]
condition~\eqref{eq:V_suff_cond} is fulfilled and $(-\Delta)^{\alpha}\geq V$ by Theorem~\ref{thm:main2}.

\emph{Case $\alpha=1/2$:} We proceed analogously as in the previous case. Using the respective estimate from Lemma~\ref{lem:g_n_bounds}, we find
\[
 \sum_{n=1}^{\infty}g_{n}(\alpha)V_{n}\leq\frac{\gamma}{\pi}\sum_{n=1}^{\infty}\frac{3+\ln n}{n^{1+\varepsilon}}=\frac{\gamma}{\pi}\left[3\zeta(1+\varepsilon)-\zeta'(1+\varepsilon)\right].
\]
Consequently, if we put
\[
 \gamma=\gamma(\alpha,\varepsilon):=\frac{2\pi^{2}\Gamma(2\alpha)}{\Gamma^{2}(\alpha)\left[3\zeta(1+\varepsilon)-\zeta'(1+\varepsilon)\right]},
\]
the statement follows from Theorem~\ref{thm:main2}.

\emph{Case $\alpha\in(1/2,3/2)$:} In this case, the potential $V$ from~\eqref{eq:power-form_Hardy_weight} reads
\[
 V_{n}=\frac{\gamma}{n^{2\alpha+\varepsilon}}, \quad n\in\N.
\]
With the aid of Lemma~\ref{lem:g_n_bounds}, we find
\[
 \sum_{n=1}^{\infty}g_{n}(\alpha)V_{n}\leq\gamma D_{\alpha}\,\zeta(1+\varepsilon).
\]
Hence, by taking
\begin{align}
 \gamma=\gamma(\alpha,\varepsilon):= & \frac{2\pi\Gamma(2\alpha)}{D_{\alpha}\Gamma^{2}(\alpha)\zeta(1+\varepsilon)} \\
 = &\frac{2\pi(1-\alpha)(2-\alpha)^{2\alpha}\Gamma(2\alpha)}{(\chi_{(1,3/2)}(\alpha)(1-\alpha)(2-\alpha)^{2\alpha} + \alpha(\alpha+1)2^{2\alpha-1})\Gamma^2(\alpha)\tan(\alpha\pi)\zeta(1+\varepsilon)}, \label{eq:gamma_alpha_eps}
\end{align}
we obtain the desired inequality as a consequence of Theorem~\ref{thm:main2}. For $\alpha = 1$, the above formula is understood as the limiting value $\gamma(1,\eps) = 1/ \zeta (1+\eps)$.
\end{proof}

\section{Perturbations of the bilaplacian by localized potentials}
\label{sec:bilap}

For the discrete bilaplacian, the unique negative eigenvalue of the perturbation $\Delta^2 - c\delta_n$ described in Lemma~\ref{lem:ev} can be characterised by an implicit equation. If $n=1$, it can even be expressed fully explicitly as a function of the coupling constant $c$. Recall $U_{n}$ denotes the $n$th Chebyshev polynomial of the second kind.

\begin{proposition}\label{prop:bilap}
	For $n \in \N$ and $c>0$, the unique negative eigenvalue of $\Delta^2-c\delta_n$ is given as
	\begin{equation}
		\la_n(c) = - \frac{(1-r^2)^4}{r^2(1+r^2)^2}
	\end{equation}
	where $r$ is the unique solution of the implicit equation
	\[
	\frac{r^2}{1-r^2} \sum_{j=0}^{n-1} r^{2j}\, U_{2j} \left(\frac{2 r}{1+r^2}\right)=\frac{1}{c}
	\]
	in $(0,1)$.
	In particular, for $n=1$ it is
	\begin{equation}
		\la_1(c) = -\frac{c^4}{(c+1)(c+2)^2}.
	\end{equation}
Moreover, for $n\ge 2$ we have the asymptotic formulas
	\begin{align}
	 \lambda_{n}(c) & = -\frac{n^8 c^4}{4} \left( 1 - \frac{2n (4n^2 -1)}{3} c + \mathcal O \left(c^2\right) \right), && \hspace{-2cm} c \to0^+, \\
	 \lambda_{n}(c) & = -c \left( 1 -  \frac{6}{c} + \mathcal O \left(\frac{1}{c^2}\right)\right), && \hspace{-2cm} c \to\infty.
	\end{align}
\end{proposition}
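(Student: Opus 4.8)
The starting point is the characterization from Lemma~\ref{lem:ev} combined with the Birman--Schwinger equation~\eqref{eq:mu.BS}: the unique negative eigenvalue $\la_n(c)$ is the solution of $((-\Delta)^2-\la)^{-1}_{n,n}=1/c$. So everything reduces to computing the diagonal Green kernel $((-\Delta)^2-\la)^{-1}_{n,n}$ for $\la<0$ in closed form. I would do this not via the integral representation~\eqref{eq:Green.alpha} but by solving the underlying difference equation: for $\la<0$ write $2(1-\cos\theta)$-type substitutions, i.e. set $2-2\cos\theta$ for the symbol of $-\Delta$; since we need $(-\Delta)^2-\la$, factor it as $((-\Delta)-\sqrt\la)((-\Delta)+\sqrt\la)$ (with $\sqrt\la$ purely imaginary) — more practically, parametrize the two "exponential" solutions of $(-\Delta-z)u=0$ by $u_k=r^k$ where $z=2-r-r^{-1}$, and note that $(-\Delta)^2-\la$ annihilates sequences built from the two values $r,\tilde r$ solving $2-r-r^{-1}=\pm\ii\sqrt{-\la}$ appropriately. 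Choosing the parametrization $\la=-(1-r^2)^4/(r^2(1+r^2)^2)$ is exactly engineered so that the relevant decaying solutions are expressible through a single parameter $r\in(0,1)$; I would verify this substitution makes $2^2(1-x)^2-\la$ factor nicely when $x=\cos\theta$ and $r=\tan(\theta/2)$-type, or simply check it a posteriori.

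The concrete mechanism: the resolvent kernel of a (bi)Jacobi-type operator on $\N$ with Dirichlet condition at $0$ has the standard form $((-\Delta)^2-\la)^{-1}_{m,n}=\phi_{\min(m,n)}\,\psi_{\max(m,n)}/W$, where $\phi$ is the solution satisfying the boundary condition at $0$ (here $\phi_0=\phi_{-1}=0$ built from Chebyshev polynomials $U_{k-1}$ with the two "frequencies"), $\psi$ is the $\ell^2$-decaying solution at $+\infty$, and $W$ is a Wronskian-type constant. Evaluating at $m=n$ gives $((-\Delta)^2-\la)^{-1}_{n,n}=\phi_n\psi_n/W$. After the substitution, $\phi_n$ becomes a finite sum $\sum_{j=0}^{n-1} r^{2j}U_{2j}(2r/(1+r^2))$ up to an explicit prefactor, $\psi_n$ a pure power of $r$, and the prefactors combine with $W$ to leave precisely $\frac{r^2}{1-r^2}\sum_{j=0}^{n-1}r^{2j}U_{2j}(2r/(1+r^2))=1/c$. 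The argument $2r/(1+r^2)\in(0,1)$ is the cosine corresponding to one of the two frequencies; the identity $U_k(\cos\theta)=\sin((k+1)\theta)/\sin\theta$ from~\eqref{eq:U_n_cos} is the key tool for assembling the two frequency contributions into a single real sum. I would present this computation compactly, checking the cases $\la\to0^-$ ($r\to1^-$, sum $\to\infty$, consistent with Lemma~\ref{lem:sing}) and $\la\to-\infty$ ($r\to0^+$) to confirm monotonicity and the correct branch.

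For $n=1$ the sum is the single term $U_0=1$, so the implicit equation is $r^2/(1-r^2)=1/c$, i.e. $r^2=1/(c+1)$; substituting into $\la_1(c)=-(1-r^2)^4/(r^2(1+r^2)^2)$ and simplifying with $1-r^2=c/(c+1)$, $1+r^2=(c+2)/(c+1)$ gives $\la_1(c)=-c^4/((c+1)(c+2)^2)$ after cancellation. For the asymptotics with $n\ge2$: as $c\to0^+$ we need $r\to0^+$, expand $U_{2j}(2r/(1+r^2))=2j+1+O(r^2)$, so the sum is $\sum_{j=0}^{n-1}r^{2j}(2j+1)+O(r^2\cdot\text{stuff})=1+3r^2+O(r^4)$ and $r^2/(1-r^2)\cdot(1+3r^2+\cdots)=1/c$ gives $r^2=c+\cdots$; then feed into $\la_n(c)=-(1-r^2)^4/(r^2(1+r^2)^2)$, but note the leading behavior $\la_n(c)\sim -n^8c^4/4$ signals I have the wrong scaling — re-examining, one in fact needs $r\to0$ with the sum's Chebyshev arguments handled more carefully, or equivalently the correct small-$c$ balance is $r^{2n}\sim$ const$\cdot c$ (the sum is dominated by its top term $r^{2(n-1)}U_{2n-2}$, since $U_{2n-2}(2r/(1+r^2))$ near $r=0$... ) — this is the step I expect to be fussiest: getting the correct leading power of $c$ and the first correction requires carefully identifying which term dominates the finite sum and Taylor-expanding $U_{2j}$ and the prefactor consistently; I would do it by writing $c=c(r)$ explicitly from the implicit equation, inverting the series $r=r(c)$ to two orders, and composing with $\la_n=\la_n(r)$, tracking constants via the values $U_{2j}(1)=2j+1$ and $U_{2j}'$. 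As $c\to\infty$ we need $r\to1^-$; set $r=1-s$, then $2r/(1+r^2)=1-s^2/2+O(s^3)\to1$, use $U_{2j}(\cos\theta)\approx 2j+1$ again but now the sum has $n$ terms each $\approx$ linear, giving sum $\approx n^2$ while $r^2/(1-r^2)\approx 1/(2s)\to\infty$; balancing $n^2/(2s)=1/c$ gives $s\sim cn^2/2$?? — again the sign/scaling needs care, and the cleaner route is to expand $\la_n(c)=-(1-r^2)^4/(r^2(1+r^2)^2)$ for $r\to1$: $(1-r^2)^4\sim(2s)^4$, denominator $\to 4$, so $\la_n\sim -4s^4$, and combined with the implicit equation this must reproduce $\la_n(c)=-c(1-6/c+O(1/c^2))$, pinning down $s$ as a function of $c$ to the needed order. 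The main obstacle throughout is bookkeeping of these two-term asymptotic expansions — inverting the implicit relation $c=c(r)$ to sufficient order and substituting — rather than any conceptual difficulty; the structural identity for the Green kernel is routine once the right parametrization in $r$ is in place.
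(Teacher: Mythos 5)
Your overall strategy is the paper's: reduce to the Birman--Schwinger equation $((-\Delta)^2-\la)^{-1}_{n,n}=1/c$ from Lemma~\ref{lem:ev}, compute the diagonal Green kernel in closed form via the parametrization $\la=-(1-r^2)^4/(r^2(1+r^2)^2)$ (the paper gets this from the Joukowski variables $\xi=\bar\eta$, $r=|\xi|$, in Lemma~\ref{lem:Green.bilap}), and your $n=1$ computation is correct. One caveat on the kernel: for a fourth-order difference operator the resolvent is \emph{not} of the simple Jacobi form $\phi_{\min}\psi_{\max}/W$; you need the factorization $((-\Delta)^2-\la)^{-1}=\tfrac{1}{2\sqrt\la}[(-\Delta-\sqrt\la)^{-1}-(-\Delta+\sqrt\la)^{-1}]$ (equivalently the partial-fraction step the paper uses in Lemma~\ref{lem:bilap}), which you do mention as an alternative, so this is repairable.

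The genuine gap is in the asymptotics: you have the two limits exactly backwards. Since the left-hand side $\frac{r^2}{1-r^2}\sum_{j=0}^{n-1}r^{2j}U_{2j}(2r/(1+r^2))$ must equal $1/c$, the limit $c\to0^+$ forces the left-hand side to blow up, which happens as $r\to1^-$ (not $r\to0^+$): there $U_{2j}\to 2j+1$, the sum tends to $n^2$, and $1-r^2\sim n^2c$, whence $\la_n(c)\sim -(n^2c)^4/4=-n^8c^4/4$ — this is where the $n^8c^4/4$ comes from, not from any dominance of the top term of the sum, and your proposed fix ($r^{2n}\sim\mathrm{const}\cdot c$) would not produce it. Dually, $c\to\infty$ forces $r\to0^+$, the sum tends to $U_0=1$, $r^2\sim 1/c$, and $\la_n(c)\sim-1/r^2\sim-c$. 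This also matches the spectral picture: weak coupling gives an eigenvalue emerging from the band edge $\la\to0^-$ ($\xi$ near the unit circle), strong coupling gives a deep eigenvalue $\la\to-\infty$ ($\xi\to0$). The paper then extracts the second-order coefficients by applying the implicit function theorem to $F(c,s)=c\sum_j s^{j+1}p_j(4s/(1+s)^2)-(1-s)$ around $(c,s)=(0,1)$ and to the analogous $G(1/c,s)$ around $(0,0)$, computing $s'(0)$, $s''(0)$ via implicit differentiation and the values $p_j(1)=U_{2j}(1)=2j+1$; until the correspondence between the limits of $c$ and of $r$ is fixed, none of that expansion can be set up.
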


The proof relies on the defining properties~\eqref{eq:def.mu} and~\eqref{eq:mu.BS} for $\la_n(c)$ and an explicit formula for the Green kernel of the resolvent. The latter might be of independent interests and is therefore stated in a lemma. It uses a convenient transformation of the spectral parameter. Recall that the Joukowski transform is the bijection
\begin{equation}
	\phi: \mathbb D \setminus \{0\} \to \C \setminus [-2,2], \quad \phi (z) := z + z^{-1},
\end{equation}
where $\mathbb{D}:=\{z\in\C \mid |z|<1\}$. For every $\la \notin \sigma (\Delta^2) = [0,16]$, there exist unique $0 \neq \xi, \eta \in \mathbb D$ such that
\begin{equation}\label{eq:Jouk.xi.eta}
	2 + \sqrt \la = \xi + \xi^{-1}, \quad 2 - \sqrt \la = \eta + \eta^{-1}.
\end{equation}
Notice that the resolvent formula below does not depend on the particular definition of the complex square root. Choosing another branch only exchanges the roles of $\xi$ and $\eta$, in which the formula indeed commutes.

\begin{lemma}\label{lem:Green.bilap}
	For $\la \in \C \setminus[0,16]$ and $m,n \in \N$, the Green kernel of the bilaplacian is given by
	\begin{equation}
		(\Delta^2 - \la)^{-1}_{m,n} = \frac{\xi\eta}{(1-\xi\eta)(\xi-\eta)} \left( \frac{\xi^{m+n} - \xi^{|m-n|}}{\xi-\xi^{-1}} - \frac{\eta^{m+n} - \eta^{|m-n|}}{\eta-\eta^{-1}} \right).
	\end{equation}
	Here $0 \neq \xi, \eta \in \mathbb D$ are uniquely determined by~\eqref{eq:Jouk.xi.eta}.
\end{lemma}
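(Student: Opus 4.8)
The plan is to reduce the resolvent of $\Delta^2$ to resolvents of the shifted discrete Laplacians $-\Delta$ by means of a partial-fraction decomposition in the spectral parameter. Since $(-\Delta)^2 = \Delta^2$, we can use the diagonalization~\eqref{eq:laplacian_diagonalized}, under which $\Delta^2 - \la$ corresponds to multiplication by $4(1-x)^2 - \la$. Writing $4(1-x)^2 - \la = \big(2(1-x) - \sqrt\la\big)\big(2(1-x) + \sqrt\la\big)$, the algebraic identity
\begin{equation}
\frac{1}{\big(2(1-x)-\sqrt\la\big)\big(2(1-x)+\sqrt\la\big)} = \frac{1}{2\sqrt\la}\left(\frac{1}{2(1-x)-\sqrt\la} - \frac{1}{2(1-x)+\sqrt\la}\right)
\end{equation}
transfers, via functional calculus, to the operator identity
\begin{equation}
(\Delta^2 - \la)^{-1} = \frac{1}{2\sqrt\la}\Big( \big(-\Delta - (2+\sqrt\la)\big)^{-1} - \big(-\Delta - (2-\sqrt\la)\big)^{-1} \Big)
\end{equation}
for $\la \notin [0,16]$. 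Thus the whole problem is reduced to the resolvent kernel of $-\Delta$ at a point outside $[0,4]$.

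Next I would compute $\big((-\Delta) - z\big)^{-1}_{m,n}$ for $z \notin [0,4]$. This is the classical Green function of the Jacobi matrix~\eqref{eq:def.Lap}: writing $2 - z = w + w^{-1}$ with $w \in \mathbb D \setminus\{0\}$ (so $z = 2 - w - w^{-1}$), the solution of $(-\Delta - z)u = 0$ decaying at $+\infty$ is $u_n = w^n$, while the solution satisfying the boundary condition $u_0 = 0$ is $u_n = w^{-n} - w^n$ up to normalization. The standard formula for the Green function of a second-order difference operator in terms of these two solutions and their (constant) Wronskian gives
\begin{equation}
\big((-\Delta) - z\big)^{-1}_{m,n} = \frac{w}{1-w^2}\,\big(w^{|m-n|} - w^{m+n}\big) = -\frac{w^{|m-n|} - w^{m+n}}{w - w^{-1}}.
\end{equation}
I would verify this by a direct check that it satisfies the resolvent equation and the boundary condition, or by a short residue computation from the integral representation~\eqref{eq:Green.alpha} with $\alpha = 1$ after the substitution $x = \cos\theta$. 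Now applying this with $z = 2 + \sqrt\la$, which by~\eqref{eq:Jouk.xi.eta} corresponds to $w = \xi^{-1}$ (equivalently $w = \xi$, both being inverse to each other on the unit circle problem — one must pick the one inside $\mathbb D$), and with $z = 2 - \sqrt\la$, corresponding to $w = \eta$, substituting into the partial-fraction identity and using $2\sqrt\la = (\xi + \xi^{-1}) - (\eta + \eta^{-1})$ yields, after collecting terms over the common factor, exactly the claimed expression with prefactor $\dfrac{\xi\eta}{(1-\xi\eta)(\xi-\eta)}$.

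The main obstacle is bookkeeping with the two Joukowski pre-images and the branch of the square root: one has to be careful which of $\xi,\xi^{-1}$ lies in $\mathbb D$ and to track that $2\sqrt\la = \xi + \xi^{-1} - \eta - \eta^{-1}$ combines with the two denominators $\xi - \xi^{-1}$, $\eta - \eta^{-1}$ so that the prefactor simplifies to $\xi\eta/((1-\xi\eta)(\xi-\eta))$. The symmetry remark in the statement — that exchanging $\xi \leftrightarrow \eta$ (the effect of choosing the other square-root branch) leaves the formula invariant — should be recorded as a consistency check: the bracketed difference is antisymmetric in $(\xi,\eta)$ and so is $\xi - \eta$ in the denominator, while $\xi\eta/(1-\xi\eta)$ is symmetric, so the product is symmetric, confirming the formula is branch-independent. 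Once the $\alpha=1$ Green function is in hand, everything else is elementary algebra.
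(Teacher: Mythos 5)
Your strategy is sound and, at bottom, the same as the paper's: the paper also reduces the bilaplacian resolvent via the partial fraction $\frac{1}{4(1-x)^2-\la}=\frac{1}{2\sqrt\la}\bigl(\frac{1}{2(1-x)-\sqrt\la}-\frac{1}{2(1-x)+\sqrt\la}\bigr)$, only it carries this out inside the integral representation~\eqref{eq:Green.alpha} and evaluates the two resulting first\nobreakdash-order integrals with the tabulated Poisson-kernel identity~\eqref{eq:GR.2} (Lemma~\ref{lem:bilap}), whereas you evaluate the resolvent of $-\Delta$ by the classical two-solution/Wronskian formula for Jacobi matrices. Both routes are equally short; yours avoids the appendix computation at the price of quoting (or re-deriving) the half-line Green function of $-\Delta$, which you do correctly.

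There is, however, a concrete slip you must fix: the displayed operator identity is wrong as written. Since $-\Delta$ corresponds to multiplication by $2(1-x)$, the term $\frac{1}{2(1-x)\mp\sqrt\la}$ is the resolvent of $-\Delta$ at the spectral parameter $\pm\sqrt\la$, \emph{not} at $2\pm\sqrt\la$; the correct identity is
\begin{equation}
	(\Delta^2-\la)^{-1}=\frac{1}{2\sqrt\la}\Bigl(\bigl(-\Delta-\sqrt\la\bigr)^{-1}-\bigl(-\Delta+\sqrt\la\bigr)^{-1}\Bigr).
\end{equation}
Accordingly, in your Green-function formula with $2-z=w+w^{-1}$, the choice $z=-\sqrt\la$ gives $w+w^{-1}=2+\sqrt\la$, i.e.\ $w=\xi$ by~\eqref{eq:Jouk.xi.eta}, and $z=\sqrt\la$ gives $w=\eta$; your statement that ``$z=2+\sqrt\la$ corresponds to $w=\xi^{-1}$ (equivalently $w=\xi$)'' is doubly off, both in the value of $z$ and in the assertion that $\xi$ and $\xi^{-1}$ are interchangeable (only the one in $\mathbb D$ yields the decaying solution). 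With the corrected pairing, $-\frac{1}{2\sqrt\la}=\frac{\xi\eta}{(\xi-\eta)(1-\xi\eta)}$ follows from $2\sqrt\la=\xi+\xi^{-1}-\eta-\eta^{-1}=-(\xi-\eta)(1-\xi\eta)/(\xi\eta)$, and the claimed formula drops out exactly as you describe; your symmetry check under $\xi\leftrightarrow\eta$ is also correct.
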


\begin{proof}
	Follows immediately from~\eqref{eq:Green.alpha} and Lemma~\ref{lem:bilap}.
\end{proof}

\begin{proof}[Proof of Proposition~\ref{prop:bilap}]
	Notice first that if $\la < 0$ then $\sqrt \la \in \ii \R$, such that $\xi = \overline \eta$ follows easily from~\eqref{eq:Jouk.xi.eta}. Hence, using Lemma~\ref{lem:Green.bilap} with $m=n$, the defining relations~\eqref{eq:def.mu} and~\eqref{eq:mu.BS} for $\la = \la_n(c)$ become 
	\begin{equation}
		\frac 1c = \frac{|\xi|^2}{1-|\xi|^2} \frac{1}{\im \, \xi} \im \left( \frac{\xi^{2n} - 1}{\xi-\xi^{-1}} \right) =   \frac{|\xi|^2}{1-|\xi|^2} \sum_{j=0}^{n-1} \frac{\im \, (\xi^{2j+1})}{\im \, \xi}.
	\end{equation}
	Next, from~\eqref{eq:Jouk.xi.eta} it is elementary to derive
	\begin{equation}\label{eq:re.im.xi}
		\re \, \xi = \frac{2 |\xi|^2}{1+|\xi|^2}, \quad \im^2 \xi = -\frac{\la |\xi|^4}{(1-|\xi|^2)^2},
	\end{equation}
	and thus for the cosine of the argument 
	\begin{equation}
		\cos({\rm Arg} \, \xi) =  \frac{\re \, \xi}{|\xi|} = \frac{2 |\xi|}{1+|\xi|^2}.
	\end{equation}
	Using~\eqref{eq:U_n_cos}, we can further compute
	\begin{equation}
		\frac{\im \, (\xi^{2j+1})}{\im \, \xi} = |\xi|^{2j} \frac{\sin ((2j+1) {\rm Arg} \, \xi)}{\sin ({\rm Arg}\, \xi)} = |\xi|^{2j} U_{2j} \left(\frac{2 |\xi|}{1+|\xi|^2}\right).
	\end{equation}
	Combining the above, we see that $|\xi|$ solves the equation
	\begin{equation}\label{eq:impl_func}
		\frac1c = \frac{|\xi|^2}{1-|\xi|^2} \sum_{j=0}^{n-1} |\xi|^{2j} U_{2j} \left(\frac{2 |\xi|}{1+|\xi|^2}\right).
	\end{equation}
	The dependence of $\la$ on $|\xi|$ can be expressed from~\eqref{eq:re.im.xi} as
	\begin{equation}\label{eq:lam_rel_mod_xi}
		\la = - \frac{(1-|\xi|^2)^4}{|\xi|^2(1+|\xi|^2)^2}.
	\end{equation}
	From this it is easy to see that there is a one to one correspondence between $\la <0$ and $|\xi| \in (0,1)$. Hence an occurrence of two different solutions $|\xi|$ of~\eqref{eq:impl_func} gives rise to two different eigenvalues of $\Delta^{2}+c\delta_{n}$, which would contradict Lemma~\ref{lem:ev}. Therefore there is exactly one solution $|\xi|$ of \eqref{eq:impl_func} located in $(0,1)$. The formula for $n=1$ then follows easily from~\eqref{eq:impl_func} and~\eqref{eq:lam_rel_mod_xi}.
	
	To prove the asymptotic relations, we first derive the respective expansions for $|\xi|^2 = |\xi|^2 (c)$ using the implicit function theorem in equation~\eqref{eq:impl_func}. Writing $|\xi|^2 =: s$ and observing that $U_{2j}$ contains only even degree monomials, we see that $(c,s) = (0,1)$ solves 
	\begin{equation}\label{eq:impl_F}
		F(c,s) : = c \sum_{j=0}^{n-1} s^{j+1} p_{j} \left(\frac{4 s}{(1+s)^2}\right) - (1-s) = 0,
	\end{equation}
	where $p_j$ is the polynomial of degree $j$ such that $p_j(x^2) = U_{2j} (x)$. Since $F$ is holomorphic in a neighborhood of $(0,1)$ and one can easily see that $\partial_s F (0,1) = 1$, it follows from the implicit function theorem~\cite[Thm.~7.6]{Fritzsche-Grauert-2002} that there exists a solution $s = s(c)$ of~\eqref{eq:impl_F} analytic in a neighborhood of $c = 0$. Since the solution $|\xi|$ of~\eqref{eq:impl_func} is unique for positive $c$ and tends to one as $c \to 0^+$, we have  $|\xi|^2 (c) = s(c)$ for $c>0$ in this neighborhood. 
	
	The coefficients of the sought expansion can be determined by implicit differentiation of~\eqref{eq:impl_F}, which first gives
	\begin{equation}
		0 = \frac{\dd F}{\dd c}(c,s) = \partial_c F (c,s) + \partial_s F (c,s) s'(c).
	\end{equation}
	Using~\eqref{eq:Cheb.max}, we see that $p_j(1) = U_{2j} (1) = 2j+1$ and thus $\partial_c F(0,1) = n^2$. Consequently, $s'(0) = -n^2$. Differentiating once more leads to
	\begin{equation}\label{eq:impl_diff_2}
		0 = \frac{\dd^2F}{\dd c^2}(c,s) = 2 \partial_s \partial_c F (c,s) s'(c) + \partial_s^2 F(c,s) s'(c)^2 + \partial_s F(c,s) s'' (c),
	\end{equation} 
	where we used $\partial_c^2 F \equiv 0$. Noticing also that $\partial_s^2 F (0,1) = 0$, one further computes
	\begin{equation}
		s''(0) = - 2 \frac{\partial_s \partial_c F (0,1)}{\partial_s F(0,1)} s'(0) = \frac{n^3(n+1)(4n-1)}{3}.
	\end{equation}
	In the second identity we used
	\begin{equation}
		\partial_s \partial_c F (c,s) = \sum_{j=0}^{n-1} \left[ (j+1) s^{j} p_{j} \left(\frac{4 s}{(1+s)^2}\right) + s^{j+1} p_j'\left(\frac{4 s}{(1+s)^2}\right) \frac{4 (1+s)^2 - 8s (1+s)}{(1+s)^4} \right]
	\end{equation}
	and $p_j(1) = 2j+1$ to calculate the mixed derivative at $(0,1)$. From Taylor's theorem we now obtain the asymptotic expansion
	\begin{equation}
		|\xi|^2 (c) = 1 - n^2 c + \frac{n^3(n+1)(4n-1)}{6} c^2 + \mathcal O \left(c^3\right), \quad c \to 0^+.
	\end{equation}

	For the asymptotics around infinity, we set $d := 1/c$ in~\eqref{eq:impl_func} and analyze the implicit equation
	\begin{equation}
		G(d,s) := \frac{1}{1-s}\sum_{j=0}^{n-1} s^{j+1} p_{j} \left(\frac{4 s}{(1+s)^2}\right) - d = 0
	\end{equation}
	around its solution $(d,s) = (0,0)$. Clearly, $\partial_d G (d,s) \equiv -1$ and $\partial_d^2 G (d,s) \equiv\partial_d \partial_s G (d,s) \equiv 0$. The other relevant partial derivatives are
	\begin{align}
		\partial_s G(d,s) & = \frac1{1-s} \sum_{j=0}^{n-1} (j+1) s^j p_j (\frac{4s}{(1+s)^2})+ s q(s)\\
		\partial_s^2 G(d,s) & = 2 \sum_{j=0}^{n-1} (j+1) s^j \frac{\dd}{\dd s} \left[ \frac1{1-s} p_j \left(\frac{4s}{(1+s)^2}\right)\right] \\
		& \hspace{3cm} + \sum_{j=1}^{n-1} (j+1) j s^{j-1} \frac1{1-s} p_j \left(\frac{4s}{(1+s)^2}\right) + s r(s)
	\end{align}
	where $q$ and $r$ are rational functions. Using that $p_0 \equiv 1$ and $p_1(x) = 4x - 1$, it is straightforward to evaluate $\partial_s G(0,0) = 1$ and $\partial_s^2 G(0,0) = 0$. As before, by differentiating $G(d,s) = 0$ implicitly, it follows that $s'(0) = 1$ and $s''(0) = 0$. Considering that the unique solution $|\xi|$ of~\eqref{eq:impl_func} for positive $c$ tends to zero as $c \to \infty$, the above considerations lead to the expansion
	\begin{equation}
		|\xi|^2 (c) = \frac1c + \mathcal O \left( \frac1{c^3}\right), \quad c \to \infty.
	\end{equation}
	Finally, the claims for $\la (c)$ follow by inserting the derived expansions in~\eqref{eq:lam_rel_mod_xi} and applying Taylor's theorem.
	
\end{proof}

\subsection*{Acknowledgment}
The authors acknowledge the support of the EXPRO grant No.~20-17749X
of the Czech Science Foundation.

\subsection*{Data availability}
Our manuscript has no associated data.

\subsection*{Conflict of interest}
The authors state that there is no conflict of interest.

\appendix

\renewcommand{\theequation}{A.\arabic{equation}}

\section{Integral identities with Chebyshev polynomials}

To compute two integrals with Chebyshev polynomials needed in our proofs, we use a slight modification of the integral identity ~\cite[3.631 Equ.~8]{Gradshteyn-Ryzhik-2007} which reads
\begin{equation}\label{eq:GR.1}
	\int_0^\frac{\pi}{2} \sin^{\nu-1}(\varphi)
	\cos (2\ell\varphi) \dd \varphi = \frac{(-1)^{\ell}\pi\Gamma(\nu)}{2^{\nu}\Gamma\left(\frac{\nu+1}{2}+\ell\right)\Gamma\left(\frac{\nu+1}{2}-\ell\right)}
\end{equation}
for any $\ell\in\Z$ and $\re\,\nu>0$.

\begin{lemma}\label{lem:cheb_id1}
For all $m,n\in\N$ and $\alpha\in\C$ with $\re\,\alpha>-3/2$, one has
\[
 \int_{-1}^{1} (1-x)^{\alpha}U_{m-1}(x)U_{n-1}(x) \sqrt{1-x^2} \dd x
 =\frac{\pi}{2^{\alpha+1}}(-1)^{m+n}\left[\binom{2\alpha}{\alpha+m-n}-\binom{2\alpha}{\alpha+m+n}\right],
 \]
 where the generalized binomial number is defined in~\eqref{binomial}.
For fixed $m,n \in \N$, the right-hand side of the formula is understood as the respective limit at its removable singularities $\alpha = -1$ and $\alpha = -1/2$.
\end{lemma}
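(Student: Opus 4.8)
The plan is to reduce the integral to the trigonometric identity~\eqref{eq:GR.1} through a short chain of substitutions, evaluate it for $\re\,\alpha>-1/2$, and then lift the restriction by analytic continuation. Assume for now $\re\,\alpha>-1/2$. Substituting $x=\cos\theta$ with $\theta\in(0,\pi)$, using $\sqrt{1-x^2}=\sin\theta$, the identity $U_{k-1}(\cos\theta)=\sin(k\theta)/\sin\theta$ from~\eqref{eq:U_n_cos}, and $1-\cos\theta=2\sin^2(\theta/2)$, the integral takes the form
\[
 2^{\alpha}\int_{0}^{\pi}\sin^{2\alpha}\!\Bigl(\tfrac{\theta}{2}\Bigr)\sin(m\theta)\sin(n\theta)\dd\theta .
\]
The further substitution $\theta=2\varphi$ together with the product-to-sum formula $\sin(2m\varphi)\sin(2n\varphi)=\tfrac12\bigl(\cos(2(m-n)\varphi)-\cos(2(m+n)\varphi)\bigr)$ turns this into $2^{\alpha}\bigl(J(m-n)-J(m+n)\bigr)$, where
\[
 J(\ell):=\int_{0}^{\pi/2}\sin^{2\alpha}(\varphi)\cos(2\ell\varphi)\dd\varphi ,\qquad \ell\in\Z .
\]

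Now apply~\eqref{eq:GR.1} with $\nu=2\alpha+1$, so that $\re\,\nu>0$ and $\tfrac{\nu+1}{2}=\alpha+1$. This gives
\[
 J(\ell)=\frac{(-1)^{\ell}\pi\,\Gamma(2\alpha+1)}{2^{2\alpha+1}\,\Gamma(\alpha+1+\ell)\,\Gamma(\alpha+1-\ell)}=\frac{(-1)^{\ell}\pi}{2^{2\alpha+1}}\binom{2\alpha}{\alpha+\ell}
\]
by the definition~\eqref{binomial} of the generalized binomial number. Inserting $\ell=m-n$ and $\ell=m+n$ and using $(-1)^{m-n}=(-1)^{m+n}$ to extract the common sign, one arrives at the asserted identity for all $m,n\in\N$ and all $\alpha$ with $\re\,\alpha>-1/2$.

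It remains to extend to $\re\,\alpha>-3/2$. Fix $m,n\in\N$. The left-hand side is holomorphic in $\alpha$ on the half-plane $\re\,\alpha>-3/2$: on $(-1,1)$ the base $1-x$ is positive, hence $|(1-x)^{\alpha}|=(1-x)^{\re\,\alpha}$, the integrand is bounded near $x=1$ by a constant multiple of $(1-x)^{\re\,\alpha+1/2}$, which is integrable precisely when $\re\,\alpha>-3/2$, and holomorphy then follows by differentiating under the integral sign (or by Morera's theorem). On the other hand, interpreting the binomial coefficients through the reciprocal Gamma function as in~\eqref{binomial}, the right-hand side is meromorphic on $\C$, and on the strip $\re\,\alpha>-3/2$ its only possible poles are at $\alpha=-1/2$ and $\alpha=-1$, coming from the factor $\Gamma(2\alpha+1)$ (the reciprocal Gamma factors are entire and $2^{-\alpha-1}$ is entire and non-vanishing). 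Since the two sides agree on $\re\,\alpha>-1/2$ and the left-hand side is holomorphic on the connected set $\re\,\alpha>-3/2$, the right-hand side must be holomorphic there as well; in particular the singularities at $\alpha=-1/2$ and $\alpha=-1$ are removable, and the identity persists throughout $\re\,\alpha>-3/2$ once the right-hand side is evaluated at these two points as a limit. (Concretely, at $\alpha=-1/2$ the simple poles of $\binom{2\alpha}{\alpha+m-n}$ and $\binom{2\alpha}{\alpha+m+n}$ cancel in their difference, which may be checked directly from the reflection formula $\Gamma(z)\Gamma(1-z)=\pi/\sin(\pi z)$.)

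I expect the only genuinely delicate point to be the last step, namely locating the poles of the right-hand side and confirming their removability after combining the two binomial coefficients; Steps~1--2 amount to an entirely routine sequence of substitutions once~\eqref{eq:GR.1} is available, and for the application to $\alpha>0$ no continuation argument is even needed.
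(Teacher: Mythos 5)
Your proposal is correct and follows essentially the same route as the paper: the substitution $x=\cos\theta$, the half-angle and product-to-sum identities reducing the integral to~\eqref{eq:GR.1} with $\nu=2\alpha+1$, and then an identity-theorem argument (with holomorphy of the left-hand side on $\re\,\alpha>-3/2$ checked via integrability of $(1-x)^{\re\,\alpha+1/2}$) to pass from $\re\,\alpha>-1/2$ to the full half-plane. Your added observation that the simple poles of the two binomial coefficients at $\alpha=-1/2$ cancel in the difference is consistent with, and slightly more explicit than, the paper's treatment of the removable singularities.
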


\begin{proof}
Recalling identity~\eqref{eq:U_n_cos}, the substitution $x:=\cos\theta$ yields
\begin{equation}\label{eq:cheb_id1}
\int_{-1}^{1} (1-x)^{\alpha}U_{m-1}(x)U_{n-1}(x) \sqrt{1-x^2} \dd x=\int_0^\pi  (1-\cos \theta)^\alpha \sin(m\theta) \sin (n \theta) \dd \theta .
\end{equation}
Further elementary manipulations result in the formula
\begin{align}
\int_0^\pi  (1-\cos \theta)^\alpha \sin(m\theta) \sin (n \theta) \dd \theta 
&=2^{\alpha}\int_0^\pi \sin^{2 \alpha} \left(\frac \theta 2\right)  \left[ \cos ((m-n)\theta ) - \cos ((m+n) \theta) \right] \frac{\dd\theta}{2} \nonumber\\
&=2^{\alpha}\int_0^{\frac{\pi}{2}} \sin^{2 \alpha}(\varphi)\left[ \cos (2(m-n)\varphi ) - \cos (2(m+n)\varphi) \right] \dd \varphi.\nonumber\\
\label{eq:int_cheb_trig_inproof}
\end{align}
In the last integral, we apply identity~\eqref{eq:GR.1} twice and find that it is equal to
\[
\frac{\pi}{2^{\alpha+1}}(-1)^{m+n}\left[\frac{\Gamma(2\alpha+1)}{\Gamma(\alpha+1+m-n)\Gamma(\alpha+1-m+n)}-\frac{\Gamma(2\alpha+1)}{\Gamma(\alpha+1+m+n)\Gamma(\alpha+1-m-n)}\right].
\]
When rewritten in terms of the binomial numbers, we arrive at the statement. Note that even though formula~\eqref{eq:GR.1} only applies for $\re\,  \alpha > -1/2$, (with $m,n \in \N$ fixed) a straightforward identity argument between holomorphic functions implies the sought equality for $\re \, \alpha > -3/2$. To this end, the left-hand side of~\eqref{eq:cheb_id1} is easily verified to be holomorphic on the half plane $\re \, \alpha > -3/2$ by dominated convergence.
\end{proof}

\begin{lemma}\label{lem:cheb_id2}
For all $n\in\N$ and $\alpha\in\C$ with $\re\,\alpha<3/2$, one has
\[
\int_{-1}^1 \frac{U_{n-1}^2 (x)}{(1-x)^{\alpha}} \sqrt{1-x^2} \dd x=
2^{\alpha-2}\,\frac{\Gamma^{2}(\alpha)}{\Gamma(2\alpha)}\left(1-\frac{(\alpha)_{2n}}{(1-\alpha)_{2n}}\right)\tan(\pi\alpha),
\]
where $(\alpha)_{k}:=\alpha(\alpha+1)\dots(\alpha+k-1)$ is the Pochhhammer symbol. At the removable singularities $\alpha\in\Z/2$, the right-hand side is to be understood as the respective limit. 
\end{lemma}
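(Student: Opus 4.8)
The plan is to follow the pattern of the proof of Lemma~\ref{lem:cheb_id1}: reduce the weighted integral to a trigonometric one via the substitution $x=\cos\theta$, and then evaluate it by two applications of the basic identity~\eqref{eq:GR.1}. Using~\eqref{eq:U_n_cos} and $\sqrt{1-x^{2}}=\sin\theta$ for $\theta\in(0,\pi)$, the integrand collapses nicely: one has $U_{n-1}^{2}(x)\sqrt{1-x^{2}}\,\dd x=\sin^{2}(n\theta)\,\dd\theta$ (the sign being absorbed by reversing the limits of integration), so that
\[
\int_{-1}^{1}\frac{U_{n-1}^{2}(x)}{(1-x)^{\alpha}}\sqrt{1-x^{2}}\,\dd x
=\int_{0}^{\pi}\frac{\sin^{2}(n\theta)}{(1-\cos\theta)^{\alpha}}\,\dd\theta .
\]
Then $1-\cos\theta=2\sin^{2}(\theta/2)$, $\sin^{2}(n\theta)=\tfrac12\bigl(1-\cos(2n\theta)\bigr)$, and the substitution $\varphi=\theta/2$ turn the right-hand side into
\[
2^{-\alpha}\int_{0}^{\pi/2}\bigl(1-\cos(4n\varphi)\bigr)\sin^{-2\alpha}(\varphi)\,\dd\varphi .
\]

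Next I would apply~\eqref{eq:GR.1} with $\nu:=1-2\alpha$ to the two terms $\ell=0$ and $\ell=2n$; this is legitimate for $\re\,\alpha<1/2$, and the sign $(-1)^{2n}=1$ is harmless. Since $\tfrac{\nu+1}{2}=1-\alpha$, this produces
\[
2^{\alpha-1}\pi\,\Gamma(1-2\alpha)\left[\frac{1}{\Gamma^{2}(1-\alpha)}-\frac{1}{\Gamma(1-\alpha+2n)\Gamma(1-\alpha-2n)}\right].
\]
To bring this to the stated form I would use $\Gamma(1-\alpha+2n)=(1-\alpha)_{2n}\Gamma(1-\alpha)$ together with the identity $\Gamma(1-\alpha-2n)=\Gamma(1-\alpha)/(\alpha)_{2n}$, which follows from $\prod_{j=0}^{2n-1}(-\alpha-j)=(\alpha)_{2n}$; the bracket then becomes $\Gamma^{-2}(1-\alpha)\bigl(1-(\alpha)_{2n}/(1-\alpha)_{2n}\bigr)$. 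It remains to recast the prefactor, which I would do via the reflection formula $\Gamma(z)\Gamma(1-z)=\pi/\sin(\pi z)$ applied at $z=\alpha$ and $z=2\alpha$, combined with $\sin(2\pi\alpha)=2\sin(\pi\alpha)\cos(\pi\alpha)$, yielding
\[
2^{\alpha-1}\pi\,\frac{\Gamma(1-2\alpha)}{\Gamma^{2}(1-\alpha)}=2^{\alpha-2}\,\frac{\Gamma^{2}(\alpha)}{\Gamma(2\alpha)}\tan(\pi\alpha).
\]
This establishes the claimed identity for $\re\,\alpha<1/2$, away from the half-integers.

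Finally I would extend the identity to $\re\,\alpha<3/2$ by an analytic continuation argument, as in Lemma~\ref{lem:cheb_id1}. The left-hand side is holomorphic on the half-plane $\re\,\alpha<3/2$ because, near $x=1$, the integrand is a bounded multiple of $(1-x)^{1/2-\alpha}$, which is integrable precisely when $\re\,\alpha<3/2$; holomorphy then follows by dominated convergence. The right-hand side is meromorphic on $\re\,\alpha<3/2$ with at most simple poles at the half-integers, and these are all cancelled: at $\alpha=1/2$ by the simple zero of $1-(\alpha)_{2n}/(1-\alpha)_{2n}$, and at $\alpha\in\{0,1\}$ and the negative half-integers by the zeros of $\tan(\pi\alpha)$ and/or of $1/\Gamma(2\alpha)$. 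Hence, interpreted by its limit at those removable singularities, the right-hand side is holomorphic on $\re\,\alpha<3/2$ as well, and since the two holomorphic functions agree on $\{\re\,\alpha<1/2\}\setminus(\Z/2)$, a set with accumulation points, they coincide on the whole connected domain by the identity theorem. The limiting values at $\alpha=1/2$ and $\alpha=1$ are then read off directly (using, e.g., $\tan(\pi\alpha)\sim\pi(\alpha-1)$ and $(1-\alpha)_{2n}\sim(1-\alpha)(2n-1)!$ as $\alpha\to1$) and match~\eqref{eq:g_n_1/2_1}. I expect the main obstacle to be precisely this bookkeeping: recasting the prefactor into the $\tan$-form and, above all, verifying carefully that every apparent pole of the right-hand side on $0<\re\,\alpha<3/2$ is genuinely removable, which is what makes the continuation argument valid.
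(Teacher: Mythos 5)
Your proposal is correct and follows essentially the same route as the paper: the paper simply invokes Lemma~\ref{lem:cheb_id1} with $m=n$ and $\alpha$ replaced by $-\alpha$ (which packages the trigonometric reduction and the two applications of~\eqref{eq:GR.1} that you carry out by hand), and then performs the identical Pochhammer and reflection-formula bookkeeping followed by the same analytic-continuation argument to reach $\re\,\alpha<3/2$.
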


\begin{proof}
As in~\eqref{eq:def.In}, we denote the integral on the left-hand side of the claimed formula by $I_n(\alpha)$.  Lemma~\ref{lem:cheb_id1} applied with $m=n$ and $\alpha$ replaced by $-\alpha$ leads to
\[
I_{n}(\alpha)
= \frac{\pi \Gamma (1-2 \alpha) }{2^{1-\alpha}}   \left( \frac{1}{\Gamma ^2(1-\alpha)} - \frac{1}{\Gamma (1-\alpha  + 2n)\Gamma(1-\alpha - 2n )}  \right).
\]
Repeatedly using the well known identity $\Gamma(z+1)=z\Gamma(z)$, we arrive at
\[
\frac{1}{\Gamma (1-\alpha  + 2n)\Gamma(1-\alpha - 2n )}=\frac{1}{\Gamma^{2}(1-\alpha)}\frac{(\alpha)_{2n}}{(1-\alpha)_{2n}}
\]
and therefore
\[
 I_{n}(\alpha)=\pi2^{\alpha-1}\frac{\Gamma (1-2 \alpha)}{\Gamma^{2}(1-\alpha)}\left(1-\frac{(\alpha)_{2n}}{(1-\alpha)_{2n}}\right).
\]
Applying the reflection identity~\cite[Equ.~5.5.3]{DLMF}
\[
\Gamma (1-z) \Gamma (z) = \frac{\pi}{\sin(\pi z)}, 
\]
one further gets
\[
\frac{\Gamma (1-2 \alpha)}{\Gamma^{2}(1-\alpha)}=\frac{\sin^{2}(\pi\alpha)}{\pi\sin(2\pi\alpha)}\frac{\Gamma^{2}(\alpha)}{\Gamma(2\alpha)}=\frac{\tan(\pi\alpha)}{2\pi}\frac{\Gamma^{2}(\alpha)}{\Gamma(2\alpha)}.
\]
The claimed formula now readily follows. 

Notice that, for $n \in \N$ fixed, the left-hand side of the claimed identity is analytic for  $\re \, \alpha < 3/2$, while the right-hand side has removable singularities at $\alpha\in-\N_{0}/2$, as well as at $\alpha=1/2$ and $\alpha=1$. The respective formulas at the two positive parameters can be determined as
\[
 \lim_{\alpha\to1/2}I_{n}\left(\alpha\right)=\sqrt{2}\sum_{j=1}^{2n}\frac{1}{2j-1} \quad\mbox{ and }\quad \lim_{\alpha\to1}I_{n}(\alpha)=\pi n. \qedhere
\]
\end{proof}

To compute the Green kernel of the bilaplacian, we use a slight extension of the formula~\cite[3.613 Equ.~$2^6$]{Gradshteyn-Ryzhik-2007}, namely 
\begin{equation}\label{eq:GR.2}
	\int_0^\pi \frac{\cos(l \varphi)}{1-2k \cos \varphi + k^2} \dd \varphi = \frac{\pi k^{l}}{1-k^2}
\end{equation}
for $l \in \N_0$ and $k \in \mathbb D$ (extended by analyticity from the original statement for $k \in [0,1)$).

\begin{lemma}\label{lem:bilap}
	For all $m, n \in \N$ and $\mu \in \C \setminus [-4,4]$ one has
	\begin{equation}
		\int_{-1}^1 \frac{U_{m-1} (x) U_{n-1} (x)}{4 (1-x)^2 - \mu^2} \sqrt {1-x^2} \dd x = \frac\pi2 \frac{\xi\eta}{(1-\xi\eta)(\xi-\eta)} \left( \frac{\xi^{m+n} - \xi^{|m-n|}}{\xi-\xi^{-1}} - \frac{\eta^{m+n} - \eta^{|m-n|}}{\eta-\eta^{-1}} \right),
	\end{equation}
 where $\xi, \eta \in \mathbb D\setminus\{0\}$ are (unique) such that $2 + \mu = \xi + \xi^{-1}$ and $2 - \mu = \eta + \eta^{-1}$.
\end{lemma}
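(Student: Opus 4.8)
The plan is to follow the same route as in the proofs of Lemmas~\ref{lem:cheb_id1} and~\ref{lem:cheb_id2}: substitute $x=\cos\theta$ to pass to a trigonometric integral, factor the denominator into a product of two Poisson-type kernels, and evaluate by~\eqref{eq:GR.2}. Concretely, using~\eqref{eq:U_n_cos} together with $1-\cos\theta=2\sin^{2}(\theta/2)$, one first rewrites, for $\mu\in\C\setminus[-4,4]$,
\[
\int_{-1}^{1}\frac{U_{m-1}(x)U_{n-1}(x)}{4(1-x)^{2}-\mu^{2}}\sqrt{1-x^{2}}\,\dd x=\int_{0}^{\pi}\frac{\sin(m\theta)\sin(n\theta)}{16\sin^{4}(\theta/2)-\mu^{2}}\,\dd\theta,
\]
the integral being convergent since $\mu\notin[-4,4]$ forces $\mu^{2}\notin[0,16]$, so the denominator does not vanish on $[0,\pi]$. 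The crucial step is the factorisation
\[
16\sin^{4}\!\Big(\tfrac{\theta}{2}\Big)-\mu^{2}=(2-2\cos\theta-\mu)(2-2\cos\theta+\mu)=\frac{(1-2\eta\cos\theta+\eta^{2})(1-2\xi\cos\theta+\xi^{2})}{\xi\eta},
\]
where the last equality uses $2-\mu=\eta+\eta^{-1}$ and $2+\mu=\xi+\xi^{-1}$. The hypothesis $\mu\in\C\setminus[-4,4]$ is exactly what guarantees, via the Joukowski bijection, that $\xi,\eta\in\mathbb D\setminus\{0\}$ are well defined and, in addition, that $\xi\neq\eta$ (otherwise $\mu=0$) and $\xi\eta\neq1$ (impossible for $|\xi|,|\eta|<1$).

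Next I would split the integrand by partial fractions in the variable $\cos\theta$,
\[
\frac{1}{(1-2\xi\cos\theta+\xi^{2})(1-2\eta\cos\theta+\eta^{2})}=\frac{1}{(\xi-\eta)(1-\xi\eta)}\left(\frac{\xi}{1-2\xi\cos\theta+\xi^{2}}-\frac{\eta}{1-2\eta\cos\theta+\eta^{2}}\right),
\]
which one checks by evaluating residues at the roots $\cos\theta=(1+\xi^{2})/(2\xi)$ and $\cos\theta=(1+\eta^{2})/(2\eta)$. Combined with the product-to-sum identity $\sin(m\theta)\sin(n\theta)=\tfrac12\big(\cos((m-n)\theta)-\cos((m+n)\theta)\big)$, this reduces the whole computation to integrals of the form $\int_{0}^{\pi}\cos(l\theta)/(1-2k\cos\theta+k^{2})\,\dd\theta$ with $l\in\{|m-n|,m+n\}\subset\N_{0}$ and $k\in\{\xi,\eta\}\subset\mathbb D$, each of which equals $\pi k^{l}/(1-k^{2})$ by~\eqref{eq:GR.2}. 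Collecting the terms, using $\xi(\xi^{|m-n|}-\xi^{m+n})/(1-\xi^{2})=(\xi^{m+n}-\xi^{|m-n|})/(\xi-\xi^{-1})$ and its analogue for $\eta$, and pulling out the common factor $\xi\eta/((\xi-\eta)(1-\xi\eta))$ as well as the factor $\pi/2$ produced by~\eqref{eq:GR.2} and the product-to-sum formula, one arrives at the claimed closed form.

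The calculation is entirely elementary, so rather than a genuine obstacle the step most prone to error is the partial-fraction bookkeeping, which must reproduce exactly the prefactor $\xi\eta/((\xi-\eta)(1-\xi\eta))$ appearing in the statement, together with the verification that $\xi,\eta$ satisfy the non-degeneracy conditions $\xi\neq\eta$ and $\xi\eta\neq1$ implicit in the formula; as noted above, both are consequences of $\mu\notin[-4,4]$ through the Joukowski parametrisation. One could alternatively observe that $16\sin^{4}(\theta/2)-\mu^{2}$ is the symbol of $(-\Delta)^{2}-\mu^{2}=((-\Delta)-\mu)((-\Delta)+\mu)$, so that the left-hand side is a constant multiple of the difference of the Green kernels of $(-\Delta)\mp\mu$; since, however, the resolvent kernel of the discrete Dirichlet Laplacian on $\ell^{2}(\N)$ is not recorded in the present excerpt, the self-contained argument above is preferable.
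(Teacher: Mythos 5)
Your argument is correct and follows essentially the same route as the paper: substitution $x=\cos\theta$ with \eqref{eq:U_n_cos}, the product-to-sum formula, the Joukowski relations turning the two linear factors into Poisson kernels, and \eqref{eq:GR.2}. The only (cosmetic) difference is that the paper performs the partial-fraction split $\tfrac{1}{4(1-x)^2-\mu^2}=\tfrac{1}{2\mu}\bigl(\tfrac{1}{2(1-x)-\mu}-\tfrac{1}{2(1-x)+\mu}\bigr)$ before substituting, whereas you factor the quartic after substituting and then decompose in $\cos\theta$; the resulting decomposition of the integrand, and hence the bookkeeping, is identical.
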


\begin{proof}
	We start by the simple identity
	\begin{equation}
		\frac{1}{4(1-x)^2-\mu^2} = \frac{1}{2\mu} \left( \frac{1}{2(1-x) - \mu} - \frac{1}{2(1-x) + \mu} \right),
	\end{equation}
	which together with the substitution $x := \cos \theta$ and relation~\eqref{eq:U_n_cos} results in 
	\begin{align}
		& \int_{-1}^1 \frac{U_{m-1} (x) U_{n-1} (x)}{4 (1-x)^2 - \mu^2} \sqrt {1-x^2} \dd x \\
		& \hspace{2cm} = \frac1{2\mu} \int_0^\pi \left( \frac{1}{2(1-\cos \theta) - \mu} - \frac{1}{2(1-\cos \theta) + \mu} \right) \sin (m\theta) \sin (n \theta) \dd \theta \\
		& \hspace{2cm} = \frac1{4\mu} \int_0^\pi \left( \frac{\eta}{1-2 \eta \cos \theta + \eta^2 } - \frac{\xi}{1 - 2 \xi \cos \theta + \xi^2} \right) \left[\cos (|m-n|\theta) - \cos ((m+n) \theta)\right] \dd \theta.
	\end{align}
	The claim follows from suitably applying~\eqref{eq:GR.2} to each summand above and observing
	\begin{equation*}
		2 \mu = \xi + \xi^{-1} - (\eta + \eta^{-1}) = (\xi - \eta) (1 - \xi^{-1} \eta^{-1}).  \qedhere
	\end{equation*}
\end{proof}


\begin{thebibliography}{10}

\bibitem{Beckner-1995}
{\sc Beckner, W.}
\newblock Pitt's inequality and the uncertainty principle.
\newblock {\em Proc. Amer. Math. Soc. 123}, 6 (1995), 1897--1905.

\bibitem{Birman-1961}
{\sc Birman, M.~{\v{S}}.}
\newblock On the spectrum of singular boundary-value problems.
\newblock {\em Mat. Sb. (N.S.) 55 (97)\/} (1961), 125--174.

\bibitem{Bogdan-Dyda-2011}
{\sc Bogdan, K., and Dyda, B.~o.}
\newblock The best constant in a fractional {H}ardy inequality.
\newblock {\em Math. Nachr. 284}, 5-6 (2011), 629--638.

\bibitem{Ciaurri-Roncal-2018}
{\sc Ciaurri, {\'{O}}., and Roncal, L.}
\newblock Hardy's inequality for the fractional powers of a discrete
  {L}aplacian.
\newblock {\em J. Anal. 26}, 2 (2018), 211--225.

\bibitem{Devyver-Fraas-Pinchover-2014}
{\sc Devyver, B., Fraas, M., and Pinchover, Y.}
\newblock Optimal {H}ardy weight for second-order elliptic operator: an answer
  to a problem of {A}gmon.
\newblock {\em J. Funct. Anal. 266}, 7 (2014), 4422--4489.

\bibitem{Devyver-Pinchover-2016}
{\sc Devyver, B., and Pinchover, Y.}
\newblock Optimal {$L^p$} {H}ardy-type inequalities.
\newblock {\em Ann. Inst. H. Poincar\'{e} C Anal. Non Lin\'{e}aire 33}, 1
  (2016), 93--118.

\bibitem{DLMF}
Nist digital library of mathematical functions.
\newblock http://dlmf.nist.gov/, Release 1.1.8 of 2022-12-15.
\newblock F.~W.~J. Olver, A.~B. {Olde Daalhuis}, D.~W. Lozier, B.~I. Schneider,
  R.~F. Boisvert, C.~W. Clark, B.~R. Miller, B.~V. Saunders, H.~S. Cohl, and
  M.~A. McClain, eds.

\bibitem{Erdelyi_vol2}
{\sc Erd\'{e}lyi, A., Magnus, W., Oberhettinger, F., and Tricomi, F.~G.}
\newblock {\em Higher transcendental functions. {V}ol. {II}}.
\newblock Robert E. Krieger Publishing Co., Inc., Melbourne, Fla., 1981.
\newblock Based on notes left by Harry Bateman, Reprint of the 1953 original.

\bibitem{Frank-2018}
{\sc Frank, R.~L.}
\newblock Eigenvalue bounds for the fractional {L}aplacian: a review.
\newblock In {\em Recent developments in nonlocal theory}. De Gruyter, Berlin,
  2018, pp.~210--235.

\bibitem{Fritzsche-Grauert-2002}
{\sc Fritzsche, K., and Grauert, H.}
\newblock {\em From holomorphic functions to complex manifolds}, vol.~213 of
  {\em Graduate Texts in Mathematics}.
\newblock Springer-Verlag, New York, 2002.

\bibitem{Gerhat-Krejcirik-Stampach-2023}
{\sc Gerhat, B., Krej\v{c}i\v{r}\'{i}k, D., and \v{S}tampach, F.}
\newblock An improved discrete {R}ellich inequality on the half-line.
\newblock {\em Israel J. Math.\/} (2023).
\newblock to appear.

\bibitem{Gesztesy-Littlejohn-Michael-Wellman-2018}
{\sc Gesztesy, F., Littlejohn, L.~L., Michael, I., and Wellman, R.}
\newblock On {B}irman's sequence of {H}ardy-{R}ellich-type inequalities.
\newblock {\em J. Differential Equations 264}, 4 (2018), 2761--2801.

\bibitem{Glazman-1965}
{\sc Glazman, I.~M.}
\newblock {\em Direct methods of qualitative spectral analysis of singular
  differential operators}.
\newblock Israel Program for Scientific Translations, Jerusalem, 1965; Daniel
  Davey \& Co., Inc., New York, 1966.
\newblock Translated from the Russian by the IPST staff.

\bibitem{Gradshteyn-Ryzhik-2007}
{\sc Gradshteyn, I.~S., and Ryzhik, I.~M.}
\newblock {\em Table of integrals, series, and products}, seventh~ed.
\newblock Elsevier/Academic Press, Amsterdam, 2007.
\newblock Translated from the Russian, Translation edited and with a preface by
  Alan Jeffrey and Daniel Zwillinger, With one CD-ROM (Windows, Macintosh and
  UNIX).

\bibitem{Gupta-2021-arxiv}
{\sc Gupta, S.}
\newblock One-dimensional discrete {H}ardy and {R}ellich inequalities on
  integers, 2021.
\newblock arXiv:2112.10923 [math.FA].

\bibitem{Gupta-2023-PhD}
{\sc Gupta, S.}
\newblock {\em Discrete functional inequalities on lattice graphs}.
\newblock June 2023.
\newblock PhD thesis.

\bibitem{Gupta-2023}
{\sc Gupta, S.}
\newblock Hardy and {R}ellich inequality on lattices.
\newblock {\em Calc. Var. Partial Differential Equations 62}, 3 (2023), Paper
  No. 81, 18.

\bibitem{Hansmann-Krejcirik-2022}
{\sc Hansmann, M., and Krej{\v c}i\v{r}{\' i}k, D.}
\newblock The abstract {B}irman--{S}chwinger principle and spectral stability.
\newblock {\em J. Anal. Math. 148\/} (Oct. 2020), 361--398.

\bibitem{Herbst-1977}
{\sc Herbst, I.~W.}
\newblock Spectral theory of the operator
  {$(p\sp{2}+m\sp{2})\sp{1/2}-Ze\sp{2}/r$}.
\newblock {\em Comm. Math. Phys. 53}, 3 (1977), 285--294.

\bibitem{Huang-Ye-2022-arxiv}
{\sc Huang, X., and Ye, D.}
\newblock One dimensional sharp discrete {H}ardy-{R}ellich inequalities, 2022.
\newblock arXiv:2212.12680 [math.AP].

\bibitem{Keller-Nietschmann-2023}
{\sc Keller, M., and Nietschmann, M.}
\newblock Optimal {H}ardy inequality for fractional {L}aplacians on the
  integers.
\newblock {\em Ann. Henri Poincar\'e\/} (Apr. 2023).

\bibitem{Keller-Pinchover-Pogorzelski-2018-AMM}
{\sc Keller, M., Pinchover, Y., and Pogorzelski, F.}
\newblock An improved discrete {H}ardy inequality.
\newblock {\em Amer. Math. Monthly 125}, 4 (2018), 347--350.

\bibitem{Keller-Pinchover-Pogorzelski-2018}
{\sc Keller, M., Pinchover, Y., and Pogorzelski, F.}
\newblock Optimal {H}ardy inequalities for {S}chr\"{o}dinger operators on
  graphs.
\newblock {\em Comm. Math. Phys. 358}, 2 (2018), 767--790.

\bibitem{Keller-Pinchover-Pogorzelski-2020}
{\sc Keller, M., Pinchover, Y., and Pogorzelski, F.}
\newblock Criticality theory for {S}chr\"{o}dinger operators on graphs.
\newblock {\em J. Spectr. Theory 10}, 1 (2020), 73--114.

\bibitem{Krejcirik-Stampach-2022}
{\sc Krej\v{c}i\v{r}\'{i}k, D., and \v{S}tampach, F.}
\newblock A sharp form of the discrete {H}ardy inequality and the
  {K}eller-{P}inchover-{P}ogorzelski inequality.
\newblock {\em Amer. Math. Monthly 129}, 3 (2022), 281--283.

\bibitem{Yafaev-1999}
{\sc Yafaev, D.}
\newblock Sharp constants in the {H}ardy-{R}ellich inequalities.
\newblock {\em J. Funct. Anal. 168}, 1 (1999), 121--144.

\end{thebibliography}
\end{document}